\numberwithin{equation}{section}
\newtheorem{theorem}{Theorem}[section]
\newtheorem{lemma}[theorem]{Lemma}
\newtheorem{proposition}[theorem]{Proposition}
\newtheorem{claim}[theorem]{Claim}
\newtheorem{definition}[theorem]{Definition}
\newtheorem{example}[theorem]{Example}
\theoremstyle{remark}
\newtheorem{remark}[theorem]{Remark}
\newtheorem*{remark*}{Remark}
\DeclareMathOperator{\diam}{diam}
\DeclareMathOperator{\dist}{dist}
\newcommand{\N}{\mathbb{N}}
\newcommand{\R}{\mathbb{R}}
\newcommand{\Z}{\mathbb{Z}}
\newcommand{\mm}{\mathbf{m}}
\renewcommand{\d}{{\mathrm d}}
\def\dist{{\mathop\mathrm{\,dist\,}}}
\def\bint{{\ifinner\rlap{\bf\kern.35em--}
\int\else\rlap{\bf\kern.45em--}\int\fi}\ignorespaces}
\def\bbint{{\ifinner\rlap{\bf\kern.35em--}
\hspace{0.078cm}\int\else\rlap{\bf\kern.45em--}\int\fi}\ignorespaces}
\def\diam{{\mathop\mathrm{\,diam\,}}}
\def\dfrac{\displaystyle\frac}
\def\bint{{\ifinner\rlap{\bf\kern.35em--}
\int\else\rlap{\bf\kern.45em--}\int\fi}\ignorespaces}
\begin{document}

\title[]
{Two-sided boundary points of Sobolev-extension domains on Euclidean spaces}

\author{Miguel Garc\'ia-Bravo}
\author{Tapio Rajala}
\author{Jyrki Takanen}

\address{University of Jyvaskyla \\
         Department of Mathematics and Statistics \\
         P.O. Box 35 (MaD) \\
         FI-40014 University of Jyvaskyla \\
         Finland}
         
\email{miguel.m.garcia-bravo@jyu.fi}         
\email{tapio.m.rajala@jyu.fi}
\email{jyrki.j.takanen@jyu.fi}

\thanks{The authors acknowledge the support from the Academy of Finland, grant no. 314789.}
\subjclass[2000]{Primary 30L99. Secondary 46E35, 26B30.}
\keywords{}
\date{\today}

%%%%%%%%%%%%%%%%%%%%%%%%%%%%%%%%%%%%%%%%%%%%%%%%%%%%%%%%%%%%%%%%%%%%%

\maketitle

\begin{abstract}
We prove an estimate on the Hausdorff-dimension of the set of two-sided boundary points of general Sobolev-extension domains on Euclidean spaces. We also present examples showing lower bounds on possible dimension estimates of this type.
\end{abstract}

\section{Introduction}

We continue the investigation of the geometric properties of Sobolev-extension domains.
In this paper, the space of Sobolev functions we use on a domain $\Omega\subset\R^n$ is the homogeneous Sobolev space $L^{1,p}(\Omega)$, which is the space of locally integrable functions whose weak derivatives belong to $L^p(\Omega)$. We endow this space with the homogeneous seminorm
$$ \Vert f\Vert_{L^{1,p}(\Omega)}= \Vert \nabla f\Vert_{L^p(\Omega)}=\left( \int_{\Omega} \vert \nabla f(x)\vert^p\,dx \right)^{1/p}.$$
The reason for working with the homogeneous Sobolev space is simply to make our dimension estimates scaling invariant. We will comment on the non-homogeneous spaces after stating our main result.

We say that $E\colon L^{1,p}(\Omega)\to L^{1,p}(\mathbb{R}^n)$ is an extension operator if
there exists a constant $C \geq 1$ so that for every $u \in L^{1,p}(\Omega)$ we have
$||Eu||_{L^{1,p}(\mathbb{R}^n)} \leq C||u||_{L^{1,p}(\Omega) }$. We name the infimum over such possible constants $C$ by $||E||$, and call it the norm of the extension operator. We say that a domain $\Omega\subset\R^n$ is an $L^{1,p}$-extension domain if such an operator exists. The same definition applies for the non-homogeneous spaces $W^{1,p}(\Omega)$.

 Throughout this manuscript each time we refer to a Sobolev-extension domain we mean it with respect to the homogeneous norm, unless otherwise stated.

Already from the work of Calder\'on and Stein \cite{stein} we know that Lipschitz domains are $W^{1,p}$-extension domains. However, much more complicated domains admit an extension operator. For instance, the Koch-snowflake domains are extension domains and in some sense serve as sharp examples of extension domains in terms of the Hausdorff dimension of the boundary, see \cite{LRT2020}. In \cite{LRT2020}, the question of the possible size of the boundary for simply-connected planar Sobolev-extension domains was studied. In particular, for these domains there is an upper bound on the Hausdorff-dimension of the boundary in terms of the norm of the extension operator (although in \cite{LRT2020} the bound was expressed in terms of a constant in a characterizing curve-condition property provided in \cite{KRZ2015}).
Note that for the boundary of a general extension domain we cannot have a dimension estimate: take $\Omega = [0,1]^n \setminus C^n$ with $C$ a Cantor set of $\dim_\mathcal{H}(C) = 1$ but Lebesgue measure zero. Then $\Omega$ is a Sobolev $L^{1,p}$-extension domain, but $\dim_\mathcal{H}(\partial \Omega) = n$.

With a bound on the dimension of the boundary, one might wonder what other geometric limitations does the existence of an extension operator imply. A basic example of a domain that is not an $L^{1,p}$-extension domain for any $p$ is the slit disc: $\Omega := \mathbb D \setminus \{0\}\times[0,1] \subset \mathbb R^2$. A continuous Sobolev function in $L^{1,p}(\Omega)$ which is one above, say on $\{0\}\times[1/2,1]$ and zero below it serves as an example of a function that cannot be extended to a global Sobolev function. One reason for not having an extension is that there is a set of positive $1$-dimensional Hausdorff measure where we approach the boundary of $\Omega$ from two different sides. In particular, no extension would be absolutely continuous on almost every vertical line segment. However, the slit disc is an example of a $BV$-extension domain because its complement is quasiconvex (see \cite{KMS2010}).

The slit disc example can be modified to a more delicate one by replacing the removed line segment $\{0\}\times[0,1]$ by a larger set where the two-sided points are at a Cantor set on the previously removed line segment. This will give a domain where the extendability of Sobolev functions depends on the exponent $p$. Such constructions will also play a role in this paper, see Section \ref{sec:sharpness} (also for the precise definitions of these domains). By removing small neighbourhoods of the two-sided points from the domain, one can actually make the example into a Jordan domain and still retain the critical extendability properties, see
\cite{KYZ2010} (and also the earlier works \cite{mazya1981,Romanov}).

In this paper we study the question of how large the set of two-sided points can be for a Sobolev-extension domain. This question was already investigated in \cite{Jyrki} by the third named author in the case of planar simply-connected domains. Before continuing, let us give the definition of two-sidedness that we will use in this paper. In the case of simply-connected planar domains, the definition can also be reformulated in various ways using conformal mappings, see \cite{Jyrki}.

\begin{definition}[Two-sided points of the boundary of a domain]
 Let $\Omega \subset \R^n$ be a domain. A point $x \in \partial\Omega$ is called \emph{two-sided}, if there exists $R>0$ such that for all $r \in (0,R)$ there exist disjoint connected components $\Omega_r^1$ and $\Omega_r^2$ of $\Omega \cap B(x,r)$ that are nested: $\Omega_s^i \subset \Omega_r^i$ for $0 < s < r < R$ and $i \in \{1,2\}$.
\end{definition}

 We denote the set of two-sided points of $\partial\Omega$ by $\mathcal{T}_\Omega$, or simply by $\mathcal T$, if there is no possibility for confusion.
Notice that the set $\mathcal T$ need not be closed. 

For $p\geq n$, we know that $L^{1,p}$ extension domains are quasiconvex (see \cite[Theorem 3.1]{K1990}). Therefore, for an $L^{1,p}$-extension domain with $p \ge n$, we have $\mathcal{T} = \emptyset$. The interesting case is thus $1 \le p  < n$.
For this range we prove the following estimate on the size of $\mathcal T$:

\begin{theorem}\label{thm:main}
Let $n \ge 2$ and $p \in [1,n)$  and let $\Omega \subset \R^n$ be a Sobolev $L^{1,p}$-extension domain.   Then
\begin{enumerate}
 \item If $p=1$, then $\mathcal H^{n-1}(\mathcal{T}_\Omega) = 0$.
    \item If $p>1$ there exists a constant $C(n,p)>0$ so that
\[
 \dim_{\mathcal H}(\mathcal{T}_\Omega) \le n-p - \frac{C(n,p)}{\|E\|^{n}\log(\|E\|)},
\]
where $\|E\|$ is the operator norm of the homogeneous Sobolev-extension operator.
\end{enumerate}
 Here we use the convention that $\mathcal{T}_{\Omega}=\emptyset$ whenever the bound on the right-hand side of the estimate is strictly less than $0$. 
\end{theorem}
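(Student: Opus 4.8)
The plan is to bound the Hausdorff dimension of $\mathcal{T}_\Omega$ by testing the extension operator against carefully chosen Sobolev functions that are forced to oscillate near two-sided points. Fix $p \in (1,n)$ and suppose, for contradiction, that $\dim_{\mathcal H}(\mathcal{T}_\Omega) = s$ is too large, i.e. $s > n-p$ or $s$ is only slightly below $n-p$. By standard density properties of Hausdorff content, we can find a point $x_0 \in \mathcal{T}_\Omega$ and a scale at which $\mathcal{T}_\Omega$ (or a suitable subset) has an $s$-dimensional ``Frostman-type'' lower density: a measure $\mu$ supported near $x_0$ with $\mu(B(y,\rho)) \lesssim \rho^s$. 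The two-sidedness gives, for each such $y$ and each small $\rho$, two disjoint nested components $\Omega^1_\rho(y), \Omega^2_\rho(y)$ of $\Omega \cap B(y,\rho)$; on a fixed dyadic annulus $B(y,\rho)\setminus B(y,\rho/2)$ these two components are separated subsets of $\Omega$ that both reach in to radius $\rho/2$ and out to radius $\rho$.

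The key local estimate I would establish is a \emph{capacitary lower bound}: for a two-sided point, any $u \in L^{1,p}(\Omega)$ that equals $0$ on $\Omega^1$ near radius $\rho/2$ and $1$ on $\Omega^2$ near radius $\rho/2$ must have
\[
\int_{B(y,\rho)\cap\Omega} |\nabla u|^p \, dx \gtrsim \rho^{n-p},
\]
because its extension $Eu$ connects the two components \emph{through} $\R^n \setminus \Omega$ and hence must carry a full $p$-capacity worth of gradient across the annulus; more precisely $\|\nabla(Eu)\|_{L^p(B(y,2\rho))}^p \gtrsim \capa_p(\text{two separated continua}) \gtrsim \rho^{n-p}$, and then the extension inequality pushes this lower bound (divided by $\|E\|^p$) back onto $\Omega$. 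To get a dimension bound rather than just a measure bound, I would iterate this across many disjoint annuli: pack roughly $\rho^{-s}$ disjoint balls $B(y_i,\rho_i)$ centered on two-sided points into a fixed ball $B(x_0,1)$ using the $s$-dimensional content, and build a single test function $u$ that realizes the $0/1$ jump \emph{simultaneously} on all of them (using that the components are nested, the construction at different scales and locations does not interfere, after passing to a Whitney-type decomposition and a Vitali subfamily). Summing the local lower bounds gives $\|u\|_{L^{1,p}(\Omega)}^p \gtrsim \sum_i \rho_i^{n-p} \gtrsim \rho^{n-p-s}$ when $\rho_i \approx \rho$, while a competing \emph{upper} bound for the energy of the global extension (it only needs to vary in the annuli, and outside them it is locally constant) gives $\|Eu\|_{L^{1,p}(\R^n)}^p \lesssim (\text{number of annuli})\cdot \rho^{n-p} \approx \rho^{n-p-s}$ as well — so the naive two estimates are comparable and do not immediately contradict. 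The extra logarithmic and $\|E\|^{-n}$ gain must come from a more efficient multiscale test function: stacking $N \approx \log(1/\rho)$ nested scales $\rho, \rho/2, \dots$ inside each ball and letting $u$ make a $1/N$-sized jump at each scale, so that the \emph{energy} of a good extension scales like $N \cdot (1/N)^p \rho^{n-p} = N^{1-p}\rho^{n-p}$ per ball, whereas the lower bound forced on $\Omega$ via $\|E\|$ at each of the $N$ scales is $(1/N)^p \rho^{n-p}/\|E\|^p$ but \emph{summed against the content at that scale}; optimizing $N$ against $\|E\|$ yields the $1/(\|E\|^n \log\|E\|)$ improvement on the exponent. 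The case $p=1$ is the endpoint: here $\capa_1$ of two separated continua in an annulus is $\gtrsim \rho^{n-1}$ and the BV/coarea argument shows $\mathcal H^{n-1}$-a.e. point cannot be two-sided, so $\mathcal H^{n-1}(\mathcal T_\Omega)=0$ follows by a Vitali covering argument with no room to spare.

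The main obstacle, I expect, is the \emph{simultaneous construction of the test function across scales and locations} together with getting the constants to depend on $\|E\|$ in exactly the stated way. Individually, the capacitary lower bound at one two-sided point and one scale is classical (it is essentially the statement that $p$-capacity of a condenser between two continua of diameter $\sim \rho$ at distance $\sim \rho$ is $\sim \rho^{n-p}$), and the density/covering arguments are standard geometric measure theory. The delicate point is quantitative: one must choose the number of nested scales $N$ and the jump heights to balance (i) the requirement that the local energies add up across a $\rho^{-s}$-sized disjoint family, (ii) the requirement that the \emph{extension} of this $u$ cannot be too cheap — which is where $\|E\|$ enters, since $Eu$ is forced to bridge each gap in $\R^n\setminus\Omega$ — and (iii) keeping $u$ a legitimate element of $L^{1,p}(\Omega)$ with controlled norm. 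Tracking how $N$, the content exponent deficit $n-p-s$, and $\|E\|$ trade off is where the $\|E\|^{-n}\log(\|E\|)^{-1}$ shape of the bound is produced, and making this rigorous — rather than heuristic — is the technical heart of the argument.
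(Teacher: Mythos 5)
Your outline contains two genuine gaps, one of which is a mathematical error rather than just missing detail. First, the ``classical'' capacitary lower bound you invoke --- that a condenser formed by two separated continua of diameter $\sim\rho$ in $B(y,\rho)$ has $p$-capacity $\gtrsim\rho^{n-p}$ --- is simply false for $1\le p\le n-1$ when $n\ge 3$: continua are too thin, and a narrow tube around one of the two ``sides'' gives an admissible function (or, for $p=1$, a separating surface) of arbitrarily small energy. This is precisely why the paper does not argue through capacity of continua for general $p\in[1,n)$, but instead proves a measure density condition for the two local components (Proposition \ref{prop:md}), whose constant is $C(n,p)\|E\|^{-n}$, and combines it with a Sobolev--Poincar\'e type inequality (Lemma \ref{lma:sobpoinc}). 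Consequently the local energy lower bound is $\|E\|^{p-n}\rho^{n-p}$, not $\rho^{n-p}$ with an absolute constant, and the $\|E\|^{n}$ in the final exponent comes from exactly this (together with the choice of the net parameter $\lambda$), not --- as you suggest --- from optimizing a number $N$ of nested jump scales. Your $p=1$ endpoint claim suffers from the same problem; the paper handles it by showing $\mathcal H^{n-p}(G)=0$ for all $1\le p<n$ via the density theorem for $L^1_{\mathrm{loc}}$ functions applied to $|\nabla Eu|^p$, the coarea/perimeter argument being reserved for the separate $BV$ statement (Theorem \ref{thm:BV}).

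Second, you correctly observe that the single-scale packing argument produces matching upper and lower bounds and hence no conclusion, but the multiscale device you propose to break the tie (jumps of height $1/N$ at $N\approx\log(1/\rho)$ scales, then ``optimizing $N$ against $\|E\|$'') is left entirely heuristic, and it is not the mechanism that makes the proof work. The paper's actual mechanism is different: one fixes a \emph{single} $0$--$1$ test function $u$ at the top scale $\lambda^{i_0}$ around one net point, uses the counting Lemma \ref{lma:dim} to guarantee $N_j\ge\lambda^{-js}$ balls at \emph{every} finer scale $j$ whenever $s<\dim_{\mathcal H}(G)$, and extracts the energy lower bound $\|E\|^{p-n}\lambda^{(n-p)(i_0+j)}$ for the \emph{same} extension $Eu$ near each of these finer-scale points. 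The double counting across scales is prevented by deleting the union $F_{i}$ of the finer-scale balls --- this is the whole reason Lemma \ref{lma:sobpoinc} is proved with a removed set $F$ whose coordinate projections are small, and the smallness of those projections is itself controlled by the a priori counting bound \eqref{eq:estimate2} together with choosing $\lambda$ small in terms of $\|E\|$ (roughly $\lambda\sim\|E\|^{-2n/(p-1)}$). Summing the resulting geometric series against $\|Eu\|_{L^{1,p}}^p\le\|E\|^p\|u\|_{L^{1,p}(\Omega)}^p$ gives $s\le n-p-\log\bigl(1-C\|E\|^{-n}\bigr)/\log\lambda$, and the stated $\|E\|^{-n}/\log\|E\|$ correction falls out of the choice of $\lambda$. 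Without this disjointification-across-scales step (or a rigorous substitute), your proposal does not yield any improvement over $\dim_{\mathcal H}(\mathcal T_\Omega)\le n-p$, so the quantitative heart of part (2) is missing.
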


Let us now comment on the non-homogeneous Sobolev spaces.
For bounded domains $\Omega$ it is known that Sobolev $L^{1,p}$-extension domains are the same as Sobolev $W^{1,p}$-extension domains (see \cite{K1990}), so even though our main result is stated for homogeneous Sobolev-extension domains, it can be applied to $W^{1,p}$-extension domains in the case that $\Omega$ is bounded. Let us note that there exist unbounded Sobolev $W^{1,p}$-extension domains which are not $L^{1,p}$-extension domains (see \cite[Example 6.7]{K1990}). However, one might expect that our result still applies for this unbounded case because having a dimension bound relies on local properties. Indeed, our method of proof will show that we can handle also with unbounded $W^{1,p}$-extension domains because the measure density condition is still true for every $r\in (0,1)$ and the proof of Theorem \ref{thm:main} studies locally the set of two-sided points to estimate its dimension. We prefer to state our main theorem only for $L^{1,p}$-extension domains because of their homogeneous norm. If we stated it for $W^{1,p}$-extension domains then a scaling of the domain $\Omega$ would perturb the norm of the operator $E$, and hence our estimate in the dimension of the two-sided points. Obviously, a scaling of a set will never change its dimension.

We will also give a size estimate on the two sided-points of $BV$-extension domains.
\begin{theorem}\label{thm:BV}
Let  $n \ge 2$ and let $\Omega \subset \R^n$ be a $BV$-extension domain. Then
\[
\dim_{\mathcal H}(\mathcal{T}_\Omega) \le n-1.
\]
\end{theorem}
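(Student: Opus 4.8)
The plan is to exploit the BV-extension property to show that the two-sided set $\mathcal{T}_\Omega$ cannot support a positive amount of $\mathcal{H}^s$-measure for any $s > n-1$. The key principle is that a two-sided boundary point is, locally, genuinely surrounded by the domain on two topologically separated sides, and this separation forces nontrivial perimeter of the complement (or of suitable test sets) near such points; BV-extendability then caps how much of this can accumulate. Concretely, recall from \cite{KMS2010} that $\Omega$ is a $BV$-extension domain if and only if $\R^n\setminus\overline\Omega$ and $\Omega$ are such that the complement admits a bounded extension; more usefully, BV-extension domains satisfy a measure-density condition and their complements behave well with respect to perimeter. Assume for contradiction that $\mathcal{H}^s(\mathcal{T}_\Omega) > 0$ for some $s \in (n-1,n]$.

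First I would localize: by the definition of two-sidedness and a standard measure-theoretic exhaustion, I can find $R_0 > 0$, a compact subset $K \subset \mathcal{T}_\Omega$ with $\mathcal{H}^s(K) > 0$, and for each $x \in K$ the pair of nested components $\Omega_r^1, \Omega_r^2$ of $\Omega \cap B(x,r)$ for all $r < R_0$. Using a Frostman measure $\mu$ supported on $K$ with $\mu(B(x,r)) \le r^s$, I would select via a Vitali-type covering argument a large disjoint family of balls $\{B(x_i, r_i)\}$ centered on $K$ at a fixed small scale. On each such ball, the two separated components let me build a test function (or test set) $u_i$ — for instance, a function that is $0$ on $\Omega^1_{r_i}$ near the center and $1$ on $\Omega^2_{r_i}$ near the center, transitioning in the connecting part of $\Omega \cap B(x_i, r_i)$ — whose extension $Eu_i$ must then have total variation at least of order $r_i^{n-1}$, because the extension is a BV function on $\R^n$ that takes two distinct (roughly constant) values on two solid pieces near $x_i$, hence must jump across a set of $\mathcal{H}^{n-1}$-measure $\gtrsim r_i^{n-1}$ inside $B(x_i,r_i)$. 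Summing over the disjoint family and comparing with $\|E\|$ times the total variation of $u$ on $\Omega$ (which I control by the measure-density of $\Omega$, since the $u_i$ can be chosen to have bounded variation comparable to $r_i^{n-1}$ on $\Omega$ as well) yields $\sum_i r_i^{n-1} \lesssim \|E\|\sum_i r_i^{n-1}$, which is not yet a contradiction — so the argument must instead be run \emph{quantitatively at all scales simultaneously}, which is where the dimension gap enters.

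The cleaner route, which I would actually pursue, is a covering/energy argument: for each $x \in K$ and each dyadic scale $r$, the two-sidedness at $x$ forces, via the BV-extension applied to a single global test function separating the two sides, a lower bound on a "perimeter density" $\liminf_{r\to 0} \frac{P(\Omega; B(x,r))}{r^{n-1}} > 0$ — i.e., every two-sided point is a point of positive upper $(n-1)$-density for the perimeter measure $P(\Omega;\cdot)$ of $\Omega$ (which is a locally finite measure precisely because $\Omega$ is a BV-extension domain, hence has locally finite perimeter). By the standard density comparison theorem for Hausdorff measure versus a Radon measure (the upper density bound implies $\mathcal{H}^{n-1}\llcorner \mathcal{T}_\Omega \lesssim P(\Omega; \cdot)$, so $\mathcal{T}_\Omega$ is $\sigma$-finite with respect to $\mathcal{H}^{n-1}$), we conclude $\dim_{\mathcal H}(\mathcal{T}_\Omega) \le n-1$.

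The main obstacle I anticipate is the middle step: showing rigorously that two-sidedness at $x$ implies a positive lower bound on $\liminf_{r\to 0} P(\Omega;B(x,r))/r^{n-1}$ with a constant independent of $x$ (or at least bounded below on a positive-$\mathcal{H}^{n-1}$ portion of $\mathcal{T}_\Omega$). This requires carefully engineering the test function: one wants a $u \in BV(\Omega)$ (or just characteristic-function-like) that is forced to have a genuine jump of definite size inside $B(x,r)$, using only the topological separation of $\Omega_r^1$ from $\Omega_r^2$ together with the measure-density lower bound $|\Omega \cap B(y,\rho)| \gtrsim \rho^n$ for $y \in \partial\Omega$, $\rho$ small — the latter being available since $BV$-extension domains satisfy this density condition. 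The nesting in the definition of two-sidedness is what guarantees the two sides persist down all scales and thus the jump cannot be "smoothed away" by going to smaller balls, which is essential for the density lower bound rather than a mere single-scale estimate.
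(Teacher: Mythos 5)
The decisive step of your ``cleaner route'' is false. You claim that two-sidedness at $x$ forces $\liminf_{r\to 0} P(\Omega;B(x,r))/r^{n-1}>0$, i.e.\ that the perimeter measure of $\Omega$ itself has positive $(n-1)$-density at two-sided points. The perimeter of $\Omega$ is blind to exactly these points, because both of the nested components $\Omega_r^1,\Omega_r^2$ lie \emph{inside} $\Omega$: at a two-sided point the set $\Omega$ typically has full (or at least two-sidedly positive) Lebesgue density, so such points need not belong to $\partial^M\Omega$ at all. The slit disc $\mathbb D\setminus(\{0\}\times[0,1])$ is a $BV$-extension domain for which $\chi_\Omega=\chi_{\mathbb D}$ a.e., hence $P(\Omega;B(x,r))=0$ for small $r$ at every interior point of the slit, while all such points are two-sided. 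So the density lower bound fails pointwise, and with it the comparison $\mathcal H^{n-1}\llcorner\mathcal T_\Omega\lesssim P(\Omega;\cdot)$. (The parenthetical assertion that $BV$-extendability by itself makes $P(\Omega;\cdot)$ locally finite is also unjustified: the extension of the constant function $1$ need not be $\chi_\Omega$.) Your first route is closer in spirit to what is needed, but, as you yourself note, a single-scale or Vitali-type count only produces $\sum_i r_i^{n-1}\lesssim \|E\|\sum_i r_i^{n-1}$ and no contradiction; the proposal never supplies the missing mechanism that converts two-sidedness into \emph{infinite} $(n-1)$-dimensional mass.

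The object whose perimeter one must examine is not $\Omega$ but a function separating the two sides. The paper argues as follows: after the same localization as in Theorem \ref{thm:main}, if $\dim_{\mathcal H}(\mathcal T_\Omega)>n-1$ there are a ball $B(x_0,r_0)$, components $\Omega_1,\Omega_2\subset\Omega\cap B(x_0,3r_0)$ and $G\subset\partial\Omega_1\cap\partial\Omega_2$ with $\dim_{\mathcal H}(G)>n-1$, hence $\mathcal H^{n-1}(G)=\infty$. The function $\chi_E$ with $E=B(x_0,r_0)\cap\Omega_1$ lies in $BV(\Omega)$ (its jump happens across $\partial\Omega$, outside $\Omega$). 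For \emph{any} measurable $v$ on $\R^n$ with $v|_\Omega=\chi_E$ and any $t\in(0,1)$, the superlevel set $\widetilde E_t=\{v\ge t\}$ satisfies $\widetilde E_t\cap\Omega=E$; the measure density condition for $BV$-extension domains (\cite{GR2021}, applied to each of $\Omega_1$ and $\Omega_2$ separately --- this is where your density input belongs) shows every $x\in G$ has positive upper density both for $\widetilde E_t$ (via $\Omega_1$) and for its complement (via $\Omega_2$), so $G\subset\partial^M\widetilde E_t$ and $\widetilde E_t$ has infinite perimeter. By the coarea formula $v\notin BV(\R^n)$, contradicting extendability of $\chi_E$. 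Note that this argument needs no Frostman measure, no multi-scale bookkeeping, and no norm bound $\|E\|$: the single qualitative fact $\mathcal H^{n-1}(G)=\infty$ kills every candidate extension at once. That is the idea missing from your proposal.
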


Observe that taking $\Omega$ to be a slit disc shows the  sharpness of this result.

We will present the proof of Theorem \ref{thm:main} in Section \ref{sec:proof} and the proof of Theorem \ref{thm:BV} in Section \ref{sec_BV}. After that, in Section \ref{sec:examples}, we show that Theorem \ref{thm:main} (1) is sharp: there exist even a planar simply connected $L^{1,1}$-extension domain with $\dim_{\mathcal H}(\mathcal{T}) = 1$. We also give a class of domains $\Omega_\lambda$ for each $n \ge 2$ with the sets of two-sided points $\mathcal T_\lambda = C_\lambda$ being Cantor-sets, so that for every $p$ there exists a constant $C(n,p)$ for which, with the explicit 
extension operators $E_\lambda \colon L^{1,p}(\Omega_\lambda) \to L^{1,p}(\mathbb R^n)$ we construct, we have $\|E_\lambda\| \to \infty$ as $\dim_{\mathcal H}(C_\lambda)\to n-p$ and the estimate
\begin{equation}\label{eq:example}
\dim_{\mathcal H} (C_{\lambda})\geq n-p-\frac{C(n.p)}{\| E_\lambda\|}
\end{equation}
is satisfied. 

This set of examples together with Theorem \ref{thm:main} shows that the possible optimal asymptotic behaviour for the dimension bound of the two sided points in terms of the norm of the extension operator is between $n-p - C/\|E\|$ and $n-p -C/(\|E\|^{n}\log(\|E\|))$.
We note that in \cite{Jyrki} the exponents for the dimension bound and examples agreed, thus providing a possibly sharper estimate. However, as the study in \cite{Jyrki} was done in terms of a constant in a characterizing curve condition, and since the dependence between this constant and the norm of the extension operator has not been clarified, the estimate in \cite{Jyrki} does not yet translate to a sharp dimension estimate in terms of the norm of the extension operator in the planar simply connected case.

\section{Dimension estimate for the set of two-sided points}\label{sec:proof}

In this section we will prove Theorem \ref{thm:main}. Before doing so, we go through some notation and lemmata.

We often denote by $C(\cdot)$ a computable constant depending only on the parameters listed in the parenthesis. The constant may differ between appearances, even within a chain of (in)equalities. By $a\lesssim b$ we mean that $a \leq Cb$ for some
constant $C \geq 1$, that could depend on the dimension $n$. Similarly for $a\gtrsim b$. Then $a \sim b$ means that both $a\lesssim b$ and $b \gtrsim a$ hold. 
We denote by $\mm_n$ the $n$-dimensional Lebesgue measure on $\R^n$.

We will also denote by $Q(x,s)$ the cube of center $x$ and side length $s>0$ and for a given cube $Q=Q(x,s)$ and some positive $K>0$ we write $KQ=Q(x,Ks)$.

We will use the following basic lemma, similar to \cite[Lemma 3.2]{Jyrki}.
\begin{lemma}\label{lma:dim}
Let $F \subset \mathbb R^n$, $0 < \lambda < 1$, and $s \ge 0$. For every $i \in \mathbb N$ let $\{x_k^i\}_{k\in I_i}$ be a maximal $\lambda^i$-separated net in $F$.
Assume that for each $i \in \mathbb N$ and $k \in I_i$ there exists $j > i$ such that
$$N_j < \lambda^{-(j-i) s },$$
where $N_j = \# (\{ l \in I_j : B(x^{j}_{l}, \lambda^{j} )\cap B(x^{i}_{k}, \lambda^{i}) \neq \emptyset \})$. Then $\dim_{\mathcal H } (F) \leq s$.
\end{lemma}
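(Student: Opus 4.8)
The plan is to estimate the Hausdorff measure $\mathcal{H}^s(F)$ directly by building, for a suitable scale, an efficient cover of $F$ by balls whose radii raised to the power $s$ sum to a controlled quantity. Since we are allowed any $\epsilon>0$, it suffices to show $\mathcal{H}^{s+\epsilon}(F)<\infty$ for every $\epsilon>0$, and then let $\epsilon\to 0$; equivalently, I would show that the $(s+\epsilon)$-dimensional premeasure stays bounded along a sequence of scales $\lambda^i\to 0$.

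First I would fix $i_0$ and start with the cover of $F$ by the balls $\{B(x_k^{i_0},\lambda^{i_0})\}_{k\in I_{i_0}}$ coming from the maximal $\lambda^{i_0}$-separated net; by maximality these balls cover $F$. The hypothesis gives, for each such ball $B(x_k^{i_0},\lambda^{i_0})$, some $j=j(i_0,k)>i_0$ and a family of at most $N_j<\lambda^{-(j-i_0)s}$ balls $B(x_l^{j},\lambda^{j})$ from the $\lambda^j$-net that meet $B(x_k^{i_0},\lambda^{i_0})$; these smaller balls together cover $F\cap B(x_k^{i_0},\lambda^{i_0})$, again by maximality of the $\lambda^j$-net. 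Replacing each original ball by this finite family of smaller balls, and iterating this refinement, I obtain covers of $F$ at arbitrarily small scales. The key bookkeeping step is the $s$-weight estimate: when we replace one ball of radius $\lambda^{i}$ by $N_j\le \lambda^{-(j-i)s}$ balls of radius $\lambda^{j}$, the contribution to the $s$-sum changes by a factor $N_j \cdot (\lambda^{j})^s \le \lambda^{-(j-i)s}\cdot \lambda^{js} = \lambda^{is}$, i.e. it does not increase. Hence after any number of refinement steps the total $s$-weight $\sum (\text{radius})^s$ stays bounded by its initial value $\#(I_{i_0})\cdot \lambda^{i_0 s}$, while the radii go to $0$. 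This shows $\mathcal{H}^s_\infty(F)$ — and in fact $\mathcal{H}^s(F)$ up to the usual constant — is finite, and a small inflation of the exponent from $s$ to $s+\epsilon$ makes the geometric factor $\lambda^{(j-i)\epsilon}$ strictly summable so one even gets the measure to vanish; in any case $\dim_{\mathcal H}(F)\le s$.

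One technical point to handle carefully is that the refinement must be performed \emph{simultaneously and consistently}: different balls at level $i$ may be assigned different target levels $j(i,k)$, so the cover after one round of refinement is not at a single uniform scale. The clean way around this is to organize the argument as a tree (a Cantor-type scheme): the root nodes are the balls $B(x_k^{i_0},\lambda^{i_0})$, and the children of a node $B(x_k^{i},\lambda^{i})$ are the $\le \lambda^{-(j-i)s}$ balls $B(x_l^{j},\lambda^{j})$ meeting it, where $j=j(i,k)$; then iterate from each child. Every node at tree-depth $d$ is a ball of radius $\lambda^{i_d}$ with $i_d\to\infty$ as $d\to\infty$ (since $j>i$ strictly at each step), the depth-$d$ nodes cover $F$, and the weight estimate above shows $\sum_{\text{depth-}d}(\text{radius})^s\le \#(I_{i_0})\lambda^{i_0 s}$ uniformly in $d$. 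Taking $d\to\infty$ gives covers of $F$ by balls of vanishing radius with uniformly bounded $s$-weight, which is exactly what is needed.

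The main obstacle is precisely this scheduling/tree organization — making sure the covering property is preserved at every level (which follows from maximality of each net, but must be invoked at each refinement) and that the multiplicities never compound beyond the stated bound. Once the tree is set up correctly, the measure estimate is the short computation above. A minor additional point is the harmless dimensional constant relating $\mathcal{H}^s$ to the ball-premeasure $\mathcal{H}^s_\infty$ (diameters versus radii, and the constant in the definition of Hausdorff measure), which does not affect the dimension conclusion. I expect no serious difficulty beyond bookkeeping, and the statement as given is essentially a packaged form of the standard ``mass distribution / covering'' criterion adapted to nested separated nets.
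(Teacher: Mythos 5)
Your proof is correct, and it is essentially the argument intended here: the paper itself gives no proof of Lemma \ref{lma:dim} (it only points to \cite[Lemma 3.2]{Jyrki}), and your tree-structured refinement of the net covers, with the weight bookkeeping $N_j\,\lambda^{js}<\lambda^{-(j-i)s}\lambda^{js}=\lambda^{is}$ and the observation that depth-$d$ covers have mesh at most $2\lambda^{i_0+d}\to 0$, is exactly the standard way this covering criterion is proved. The only point worth tightening is the bound $\#(I_{i_0})\lambda^{i_0 s}$, which is infinite if $F$ is unbounded; this is harmless since one can either localize (bound $\mathcal{H}^s$ on $F\cap B(0,R)$ using the finitely many root balls meeting $B(0,R)$ and use countable stability of dimension) or note that in the application the set $F=G$ is bounded.
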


A measure density condition for Sobolev-extension domains was proven in \cite{HKT2008}. We will need to make the dependence of the parameters more explicit, so we modify slightly the proofs of \cite[Lemma 11]{HKT2008} and  \cite[Theorem 1]{HKT2008} to obtain the following version of their measure density condition.

\begin{proposition}[Measure density condition]\label{prop:md}
Let $1\leq p<\infty$ and let  $\Omega \subset \R^n$ be a Sobolev $L^{1,p}$-extension domain with an extension operator $E$. 

\begin{enumerate}
    \item If $1\leq p < n$ then for all $x \in \overline\Omega$ and $r\in \left(0,\min \left\lbrace 1, \left(\frac{\mm_n(\Omega)}{2\, \mm_n(B(0,1))}\right)^{1/n}\right\rbrace \right)$, denoting by $\Omega'$ a connected component of $ \Omega \cap B(x,r)$  with $x\in \overline{ \Omega'}$, we have 
\[
 \mm_n(\Omega') \ge C(n,p)\|E\|^{-n}r^n.
\]
\item If $p> n-1$ then for for all  $x \in \overline\Omega$ and $r\in \left(0, \diam(\Omega)\right)$, denoting by $\Omega'$  a connected component of $ \Omega \cap B(x,r)$ with $x\in \overline{\Omega'}$, we have 
\[
 \mm_n(\Omega') \ge C(n,p)\|E\|^{-p}r^n.
\]
\end{enumerate}
\end{proposition}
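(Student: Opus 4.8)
The plan is to follow the strategy of \cite{HKT2008}, arguing by contradiction: if $\Omega'$ is a connected component of $\Omega \cap B(x,r)$ touching $x$ and $\mm_n(\Omega')$ is too small, we build a Sobolev function on $\Omega$ that is cheap to have small energy but whose extension must have large energy, contradicting the bound $\|Eu\|_{L^{1,p}(\R^n)} \le \|E\| \|u\|_{L^{1,p}(\Omega)}$. First I would set up the test function: fix $x \in \overline\Omega$ and an admissible radius $r$, and consider the function $u$ on $\Omega$ that equals a suitable truncated, scaled cutoff supported near $B(x,r)$ — roughly $u = 1$ on $\Omega' \cap B(x, r/2)$, $u = 0$ on $\Omega \setminus B(x,r)$, interpolating linearly in between — but crucially defined so that $u \equiv 0$ on all components of $\Omega \cap B(x,r)$ other than $\Omega'$. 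Because $\Omega'$ is a connected component of the intersection, this is a legitimate element of $L^{1,p}(\Omega)$ with $\|u\|_{L^{1,p}(\Omega)}^p = \|\nabla u\|_{L^p(\Omega)}^p \lesssim r^{-p}\mm_n(\Omega' \cap (B(x,r)\setminus B(x,r/2))) \le r^{-p}\mm_n(\Omega')$.

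Next I would estimate the extension $Eu$ from below. The point is that $Eu = u$ a.e. on $\Omega$, so $Eu$ is close to $1$ on a definite chunk of $\Omega' \cap B(x,r/2)$ and close to $0$ on the complement of $B(x,r)$ in $\R^n$; if $\mm_n(\Omega')$ is small, then $Eu$ has a small superlevel set $\{Eu > 1/2\}$ inside $B(x,r)$ (since outside $\Omega$ we have no control, but we can instead run the argument on the value $\fint_{B(x,r)} Eu$ or use that $Eu$ is small in measure). The cleanest route in the case $1 \le p < n$ is the Sobolev/Poincaré inequality on the ball $B(x,r)$: since $Eu$ jumps from near $1$ on a subset of $B(x,r/2)$ to near $0$ on $B(x,2r)\setminus B(x,r)$, a relative isoperimetric / Maz'ya-type capacity estimate gives
\[
\min\{\mm_n(\{Eu>1/2\}\cap B(x,r)),\, \mm_n(\{Eu<1/2\}\cap B(x,r))\}^{\frac{n-p}{n}} \lesssim r^{p-n}\|\nabla(Eu)\|_{L^p(B(x,r))}^p \cdot r^n,
\]
and since the first quantity in the minimum is at least $\mm_n(\Omega' \cap B(x,r/2)) \gtrsim \mm_n(\Omega')$ only if $u=1$ there — better, one chooses $u$ so that $\{Eu > 1/2\} \supset \{u=1\}$ has controlled measure from below by a fixed fraction of $r^n$ minus $\mm_n(\Omega')$; combining the two estimates and absorbing yields $\mm_n(\Omega') \gtrsim \|E\|^{-n} r^n$. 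For part (2), with $p > n-1$, one instead uses the embedding of $W^{1,p}$ into Hölder (or Morrey-type) estimates on lines or the fact that $Eu$ restricted to almost every line through $B(x,r)$ is absolutely continuous with controlled oscillation; a function jumping by $1$ across a thin set forces $\|\nabla(Eu)\|_{L^p}^p \gtrsim r^{n-p}$ unless $\mm_n(\Omega')$ is comparable to $r^n$, giving the $\|E\|^{-p}$ bound. The normalization of $r$ in the hypothesis (the bound involving $\mm_n(\Omega)/\mm_n(B(0,1))$) enters to guarantee $\mm_n(\Omega \cap B(x,r)) \le \frac{1}{2}\mm_n(\Omega)$ so that $u$ is not a.e. constant on $\Omega$ — equivalently that the complementary superlevel set is also large.

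The main obstacle I expect is making the dependence on $\|E\|$ \emph{explicit and sharp} in the exponent — this is the whole reason the authors reprove \cite{HKT2008} rather than cite it. One must be careful that the only place $\|E\|$ enters is through the single application of the extension inequality $\|\nabla(Eu)\|_{L^p(\R^n)} \le \|E\|\|\nabla u\|_{L^p(\Omega)}$, and that the subsequent Sobolev/Poincaré/isoperimetric constants depend only on $n$ and $p$; then raising to the power $\frac{n}{n-p}$ (in case (1)) converts $\|E\|^p$ into $\|E\|^{n}$ after the algebra, while in case (2) no such exponentiation occurs and one is left with $\|E\|^{p}$. Tracking the exponents through the chain of inequalities, and verifying that the test function $u$ genuinely lies in $L^{1,p}(\Omega)$ with the claimed energy (in particular that extending by $0$ across the other components does not create extra gradient), is the delicate bookkeeping; the geometric idea itself is standard.
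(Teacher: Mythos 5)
The strategy you outline --- one test function at the pair of radii $r/2$ and $r$, plus a single application of the Sobolev--Poincar\'e/relative capacity inequality --- does not close, and the place where it breaks is exactly the point the paper's iteration is designed to handle. Your energy bound reads $\|\nabla u\|_{L^p(\Omega)}^p \lesssim r^{-p}\mm_n(\Omega')$, where $\mm_n(\Omega')$ is the measure of the component in the \emph{large} ball $B(x,r)$, while the lower bound coming from the capacitary estimate only sees $\mm_n(\{u=1\})=\mm_n(\Omega'\cap B(x,r/2))$ (the part of $\{Eu>1/2\}$ outside $\Omega$ is completely uncontrolled, so you cannot count it). There is no a priori lower bound for $\mm_n(\Omega'\cap B(x,r/2))$ by a fixed fraction of $r^n$ --- that is precisely the statement being proved --- and your attempted fix (``$\{u=1\}$ has measure bounded below by a fixed fraction of $r^n$ minus $\mm_n(\Omega')$'') is not a valid step: $\Omega'\cap B(x,r/2)$ can be arbitrarily small compared to both $r^n$ and $\mm_n(\Omega')$. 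What the single-scale argument actually yields is $\mm_n(\Omega'\cap B(x,r/2))^{(n-p)/n}\lesssim \|E\|^p r^{-p}\mm_n(\Omega')$, an inequality relating the measure at two different radii, from which no lower bound on $\mm_n(\Omega')$ in terms of $r^n$ follows. (Your displayed capacity inequality also carries spurious factors $r^{p-n}\cdot r^n$; the correct scale-invariant form is $\min\{\cdot\}^{(n-p)/n}\lesssim \int_{B(x,r)}|\nabla Eu|^p$, but that is secondary.)

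The paper avoids this mismatch by choosing the radii adaptively: it sets $r_0=r$ and defines $r_i$ by $\mm_n(\Omega'\cap B(x,r_i))=2^{-i}\mm_n(\Omega')$, so that for the cutoff $f_i$ between $B(x,r_i)$ and $B(x,r_{i-1})$ both the gradient term (supported on the annulus, of measure $2^{-i}\mm_n(\Omega')$) and the Sobolev lower bound (on $B(x,r_i)\cap\Omega'$, also of measure $2^{-i}\mm_n(\Omega')$) involve the \emph{same} quantity. Each scale then gives $r_{i-1}-r_i\le C(n,p)\|E\|\,2^{-i/n}\mm_n(\Omega')^{1/n}$, and summing the geometric series yields $r\le C(n,p)\|E\|\,\mm_n(\Omega')^{1/n}$, i.e.\ the $\|E\|^{-n}$ bound; the restriction on $r$ enters only to guarantee $\mm_n(\Omega\setminus B(x,r_{i-1}))\ge \mm_n(\Omega)/2$, which pins down the median constant $c_i$, a role you did identify correctly. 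Some version of this multi-scale (or ``choose the radius by halving the measure and iterate'') step is unavoidable for the quantitative statement, so your proposal as written has a genuine gap in case (1). For case (2) the paper does not give an argument at all but cites Koskela's capacity estimates; your sketch via Morrey/line restrictions would need $p>n$ as stated, whereas the claim covers $p>n-1$, so it too would need the $(n-1)$-dimensional (spherical) capacity argument rather than what you describe.
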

\begin{proof}

The case (2) follows by Theorem 2.2 and Theorem 4.1 from Koskela's dissertation \cite{K1990}, where he uses the concept of variational $p$-capacity.

We look now at the case $1\leq p<n$.

Let us denote $r_0 = r$.
By induction, we define for every $i \in \N$ the radius $r_i \in (0,r_{i-1})$ by the equality
\[
\mm_n(\Omega'\cap B(x,r_i)) = \frac12\mm_n(\Omega'\cap B(x,r_{i-1})) = 2^{-i}\mm_n(\Omega').
\]
Since $x \in \Omega'$, we have that $r_i \searrow 0$ as $i\to \infty$.

For each $i \in\N$, consider the function $f_i:\Omega \to \mathbb{R}$
\[
 f_i(y) = \begin{cases}1, & \text{for }y \in B(x,r_i) \cap \Omega',\\
 \frac{r_{i-1}-|x-y|}{r_{i-1}-r_i}, & \text{for }y \in (B(x,r_{i-1})\setminus B(x,r_i)) \cap \Omega',\\
 0, & \text{otherwise}.
 \end{cases}
\]
For the homogeneous Sobolev-norm of $f_i$ we can estimate
\begin{equation}\label{eq:homonormestimate}
\begin{split}
\|f_i\|_{L^{1,p}}^p  & = \int |\nabla f_i|^p  \le |r_i - r_{i-1}|^{-p} \mm_n((B(x,r_{i-1})\setminus B(x,r_i))\cap \Omega') \\
&=|r_i - r_{i-1}|^{-p} 2^{-i}\mm_n(\Omega').
\end{split}
\end{equation}

Call $p^*=\frac{np}{n-p}$.
For any $Ef_i\in L^{1,p}(\R^n)$ we know the existence of a constant $c_i\in\R$ (that we can assume is between $0$ and $1$) so that
$$\Vert Ef_i-c_i \Vert_{L^{p*}(\R^n)}\leq C(n,p) \Vert Ef_i\Vert_{L^{1,p}(\R^n)}.$$
Hence we have the following chain of inequalities.
$$\|f_i-c_i\|_{L^{p*}(\Omega)}\leq \|Ef_i-c_i\|_{L^{p*}(\mathbb{R}^n)}\leq C(n,p)\|Ef_i\|_{L^{1,p}(\mathbb{R}^n)}\leq C(n,p)\|E\|\,\|f_i\|_{L^{1,p}(\Omega)}. $$
Recall that by our choice of $r=r_0$ we always have
$$\mm_n(\Omega\setminus B(x,r_{i-1}))\geq  \mm_n(\Omega\setminus B(x,r_0))\geq \mm_n(\Omega)- \mm_n(B(x,r_0))\geq \frac{\mm_n(\Omega)}{2}$$
and 
$$\mm_n( B(x,r_{i}) \cap  \Omega')\leq \frac{\mm_n(\Omega)}{2} $$
for every $i\geq 1$. Then
\begin{align*}
\int_{\Omega} \vert f_i(y)-c_i\vert^{p*}\,dy &\geq \max \left\lbrace\int_{\{y:\,f_i(y)=0\}}\vert c_i\vert^{p*}\,dy,\int_{\{y:\,f_i(y)=1\}}\vert 1-c_i\vert^{p*}\,dy\right\rbrace  \\
&= \max\left\lbrace  \vert c_i\vert^{p^*}  \mm_n(\Omega\setminus B(x,r_{i-1})), \vert 1-c_i\vert^{p^*} \mm_n(B(x,r_i)\cap \Omega') ) \right\rbrace \\
&  \geq \mm_n( B(x,r_{i}) \cap  \Omega')\cdot \max \left\lbrace  \vert c_i\vert^{p^*}, \vert 1-c_i\vert^{p^*} \right\rbrace    \geq   \mm_n( B(x,r_{i}) \cap  \Omega') \cdot 2^{-p*} ,
\end{align*}
so we write, using \eqref{eq:homonormestimate},
\begin{align*}
2^{-p^*-i}\mm_n(\Omega') &=2^{-p^*}\mm_n(B(x,r_i)\cap \Omega') \leq \|f_i-c_i\|^{p^*}_{L^{p^*}(\Omega)}\leq C(n,p)\|E\|^{p^*}\|f_i\|^{p^*}_{L^{1,p}(\Omega)} \\
&\leq C(n,p)\|E\|^{p^*}  \left(\int_{\Omega}|\nabla f_i(y)|^p\, dy\right)^{p^*/p}          \\
&\leq C(n,p)\|E\|^{p^*}  \left( |r_i - r_{i-1}|^{-p} 2^{-i}\mm_n(\Omega')  \right)^{p^*/p}          \\
&\leq C(n,p)\|E\|^{p^*} 2^{-ip^*/p}\mm_n(\Omega')^{p^*/p}|r_{i-1}-r_i|^{-p^*}.
\end{align*}
Consequently,
\begin{align*}
r_{i-1}-r_i & \leq C(n,p)\|E\|2^{i(1/p^{*}-1/p)}\mm_n(\Omega')^{1/p-1/p^{*}} \\
&= C(n,p)\|E\|2^{-i/n}\mm_n(\Omega')^{1/n}.
\end{align*}
By summing up all these quantities we conclude that
$$ r =r_0= \sum_{i=1}^\infty(r_{i-1} - r_{i}) \le C(n,p)\|E\|\sum_{i=1}^\infty 2^{-i/n}\mm_n(\Omega')^{1/n} = \frac{C(n,p)\|E\|}{2^{1/n}-1}\mm_n(\Omega')^{1/n}. $$
This gives the claimed inequality.
\end{proof}

Observe that the measure density condition only holds for $1\leq p<\infty$. For $W^{1,\infty}$-extension domains this is not true.  Take for instance $C\subset [0,1]$ a  fat Cantor set with $\mm_1(C)>0$. Then almost every point of $C$ is of density $1$ on $C$, so $[0,1]\setminus C$, whose closure is the whole interval $[0,1]$, cannot satisfy any measure density condition. Then take $\Omega =\mathbb{R}^n\setminus C^n$ which, by \cite[Theorem A]{HH2008} will be quasiconvex, and consequently a $W^{1,\infty}$-extension domain by \cite[Theorem 7]{HKT2008}, but does not satisfy any measure density condition either.

In the proof of Theorem \ref{thm:main} we will use the following consequence of a Sobolev-Poincar\'e type inequality \eqref{SobPonPunctCube}. The proof of the lemma follows the proof for the classical Sobolev-Poincar\'e inequality that can be found in many text books. However, for our application of the lemma we need to include a set $F$ that is removed when integrating the gradient of the Sobolev function. This fact forces us to be more cautious. For the convenience of the reader, we provide here the proof with the needed modifications.

\begin{lemma}
\label{lma:sobpoinc}
Let $1 \le p < n$, $Q \subset \mathbb R^n$ be a cube, $\delta \in (0,1)$ and $F \subset Q$ such that for any $i \in \{1, \dots, n\}$ we have 
\[
 \mm_{n-1}(P_i(F)) \le \frac{\delta}{2n\cdot 2^n}\mm_{n-1}(P_i(Q))
\]
with $P_i$ the projection $P_i \colon (x_1,\dots,x_n) \mapsto (x_1,\dots, x_{i-1},x_{i+1}, \dots, x_n)$.
Then for any $f \in W^{1,p}(Q)$ so that $0\leq f\leq 1$ and  
\[
\min\left(\mm_n(\{y \in \frac{1}{2}Q\,:\, f(y) = 0\}), \mm_n(\{y \in \dfrac{1}{2}Q\,:\, f(y) = 1\})\right) > \delta \dfrac{\ell(Q)^n}{2^n},
\]
 we have
\begin{equation}\label{eq:largeenergy}
\int_{Q\setminus F}|\nabla f(y)|^p\,\d y \ge C(n,p)\delta^{\frac{n-p}{n}} \ell(Q)^{n-p}.
\end{equation}

\end{lemma}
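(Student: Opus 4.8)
The plan is to follow the classical argument for the Sobolev--Poincaré inequality, but keeping track of the removed set $F$ via a one-dimensional integral-geometric argument. First I would reduce to an $L^1$-type statement. Since $0 \le f \le 1$, on the set $\frac12 Q$ there is a subset $A_0 = \{f=0\}$ and a subset $A_1 = \{f=1\}$, each of measure $> \delta\,\ell(Q)^n/2^n$. The idea is to bound $|f(y)-f(z)|$ for $y\in A_0$, $z\in A_1$ by a line integral of $|\nabla f|$ along a path from $y$ to $z$ consisting of at most $n$ axis-parallel segments, and then average over $y\in A_0$ and $z\in A_1$.

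The key step is to control the bad set $F$. For a fixed pair $(y,z)$ the broken-line path passes through $n$ segments, the $i$-th of which is parallel to the $e_i$-axis and projects to the single point $P_i(\,\cdot\,)$ of the corresponding corner. If for each $i$ that projected point avoids $P_i(F)$, then the whole path avoids $F$, so along it $|f(y)-f(z)| = 1 \le \int_{\text{path}} |\nabla f|\,\d\mathcal H^1$ with the integral taken over $Q\setminus F$. The hypothesis $\mm_{n-1}(P_i(F)) \le \frac{\delta}{2n\cdot 2^n}\mm_{n-1}(P_i(Q))$ is exactly what guarantees, via Fubini/Chebyshev on the product of the two half-cubes, that the set of pairs $(y,z)\in A_0\times A_1$ for which \emph{some} segment of the canonical path hits $F$ has measure at most half of $\mm_n(A_0)\mm_n(A_1) \gtrsim \delta^2 \ell(Q)^{2n}$; hence a definite proportion $\gtrsim \delta^2\ell(Q)^{2n}$ of good pairs survives. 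Integrating $1 = |f(y)-f(z)| \le \sum_{i=1}^n \int_{I_i(y,z)} |\nabla f|\,\d\mathcal H^1$ over these good pairs, and applying Fubini to rewrite each term as an integral of $|\nabla f|$ over $Q\setminus F$ against a kernel comparable to $\ell(Q)^{1-n}$ (each line in direction $e_i$ is hit by the transversal slab of the complementary half-cube), yields
\[
\delta^2 \ell(Q)^{2n} \lesssim \ell(Q)^{n+1}\int_{Q\setminus F} |\nabla f(y)|\,\d y,
\]
i.e. an $L^1$ bound $\int_{Q\setminus F}|\nabla f| \gtrsim \delta^2 \ell(Q)^{n-1}$. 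This is the $p=1$ case (with a worse power of $\delta$ than claimed — see below).

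To obtain the sharp exponent $\delta^{(n-p)/n}\ell(Q)^{n-p}$ for $1\le p<n$, instead of estimating $f$ directly one applies the $L^1$ argument to a truncation or a power $g = \eta(f)$ of $f$ (the standard trick $g = f^{\gamma}$ with $\gamma = (n-1)p/(n-p)$, suitably adapted since here $f$ is already bounded), then combines the resulting $\|g\|_{L^{n/(n-1)}}$-type bound with Hölder's inequality; the point is that the level sets $\{f=0\}$ and $\{f=1\}$ are preserved by the truncation, so the measure-density hypothesis is untouched, while the restriction $Q\setminus F$ passes through the chain rule unharmed because $g$ and $f$ have the same (zero) gradient on a null set and we only ever integrate over $Q\setminus F$. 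Carrying the $\delta$-dependence through this Hölder step produces $\delta^{(n-p)/n}$; a small amount of bookkeeping is needed to check the power of $\delta$ coming out of the good-pairs count matches, which may require choosing the path randomly (uniformly over the $n!$ orderings of coordinates) rather than a fixed canonical one, so that each coordinate direction is used symmetrically.

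The main obstacle I anticipate is precisely this accounting: making sure the line-integral inequality can be integrated in $y$ and $z$ so that \emph{every} segment contributes an integral over $Q\setminus F$ (not over $Q$), while simultaneously the exceptional set of pairs meeting $F$ is provably small. The subtlety is that a segment of the path in direction $e_i$ is determined by the $\hat\imath$-th coordinates of \emph{one} endpoint and the $i$-th coordinates ranging over an interval; one must set up the broken line so that each of its $n$ segments is indexed by a coordinate hyperplane on which $F$ has been controlled, and so that the Fubini computation of the kernel genuinely produces the factor $\ell(Q)^{1-n}$ uniformly. Handling the removed set $F$ correctly in this integral-geometric step — rather than the convexity/truncation part, which is routine — is where the care flagged in the statement is really needed.
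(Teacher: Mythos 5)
Your route (a two-point broken-line/pairing argument over $A_0\times A_1$, to be upgraded later by a power trick) is genuinely different from the paper's, but it has a quantitative gap at its central step. Count the bad pairs: for the canonical path the $i$-th segment projects under $P_i$ to a point determined by $n-1$ of the $2n$ coordinates of $(y,z)$, so the pairs whose $i$-th segment can meet $F$ have measure at most $\mm_{n-1}(P_i(F))\,(\ell(Q)/2)^{n+1}\le\frac{\delta}{2n\cdot2^n}\ell(Q)^{n-1}(\ell(Q)/2)^{n+1}$, and summing over $i$ gives a bad set of measure of order $\delta\,\ell(Q)^{2n}$. The hypothesis, however, only guarantees $\mm_n(A_0)\mm_n(A_1)>\delta^2\ell(Q)^{2n}/2^{2n}$, which is of order $\delta^2\ell(Q)^{2n}$. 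Hence for small $\delta$ (already for $\delta<1/4$ with these constants) the bad-pair bound exceeds the guaranteed good-pair mass, and your claim that a definite proportion $\gtrsim\delta^2\ell(Q)^{2n}$ of good pairs survives does not follow; randomizing over the $n!$ coordinate orderings changes only $n$-dependent constants, not the $\delta$-scaling. The assumption $\mm_{n-1}(P_i(F))\le\frac{\delta}{2n\cdot2^n}\mm_{n-1}(P_i(Q))$ is calibrated for a one-sided comparison: the set $A=\{x\in\frac12Q:\,P_i(x)\notin P_i(F)\text{ for all }i\}$ misses at most $\frac{\delta}{4\cdot2^n}\mm_n(Q)$ of $\frac12Q$, i.e.\ at most a quarter of each level set, which is linear in $\delta$ on both sides, whereas your pairing pits a $\delta$-sized exceptional set against a $\delta^2$-sized product.

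Even granting the $L^1$ step, the second half is not yet a proof: an inequality already carrying $\delta^2$ cannot yield the claimed $\delta^{(n-p)/n}$ by a posteriori H\"older (that gives only $\delta^{2p}\ell(Q)^{n-p}$), so the truncation/power trick must be built into the path argument itself, and the $\delta$-bookkeeping you defer is exactly the crux. For comparison, the paper proves a modified Sobolev--Poincar\'e inequality: it bounds $\int_A|f-f_A|^{p^*}$, $p^*=np/(n-p)$, by $C\int_{Q\setminus F}|\nabla f|^p$, adapting the Gagliardo--Nirenberg argument (with a cutoff and the auxiliary set $A'$) and a coordinate-line Poincar\'e estimate, and then keeps only the larger of the two level-set contributions, so a single factor of $\delta$ enters and the exponent $(n-p)/n$ comes from $p^*$. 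If you wish to keep a path-type argument, compare $f$ to its mean over $A$ (one-sided) rather than pairing the two level sets.
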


\emph{Remark.} 
Observe that for the conclusion of Lemma \ref{lma:sobpoinc} it is not enough to only require $\mm_n(F)$ to be small. Consider for instance the cube minus a very thin central band which separates the cube in two connected components.

\begin{proof}
We will show that the following version of Sobolev-Poincar\'e inequality holds for our function $f$:
\begin{equation}\label{SobPonPunctCube}
\left(\int_{A}|f(y)-f_A|^{\frac{pn}{n-p}}\, \d y\right)^{\frac{n-p}{pn}} \le C(n,p)\left(\int_{Q\setminus F}|\nabla f(y)|^p\,\d y \right)^{1/p},
\end{equation}
where $A = \left\{x \in \frac12Q \,:\, P_i(x) \notin P_i(F)\text{ for every }i\right\}$ and
$$f_A=\dfrac{1}{\mm_n(A)}\int_A f(y)\,dy. $$ 
Let us first observe that this implies the inequality \eqref{eq:largeenergy}.   

\begin{align*}
\int_{Q\setminus F}|\nabla f(y)|^p\,\d y &\gtrsim \left(\int_{A}|f(y)-f_A|^{\frac{pn}{n-p}}\, \d y\right)^{\frac{n-p}{n}} \\
&\gtrsim \max \left(\mm_n(\{y\in A:\,f(y)= 1\})^{\frac{n-p}{n}} |1-f_A|^p, \mm_n(\{y\in A:\,f(y)=0\})^{\frac{n-p}{n}} |f_A|^p \right) \\
& \gtrsim  \delta^{\frac{n-p}{n}}\ell(Q)^{n-p}\max  \left( |1-f_A|^p,  |f_A|^p  \right)  \\
&\gtrsim \delta^{\frac{n-p}{n}}\ell(Q)^{n-p}.
\end{align*}
Here we used the simple observation that $\mm_n(\frac{1}{2}Q\setminus A) \leq \frac{\delta}{4\cdot 2^n} \mm_n(Q)$.

To prove \eqref{SobPonPunctCube} we start by presenting the Sobolev-embedding in the form
\begin{equation}\label{soblev-poin-modif}
\left(\int_{A'}|g(y)|^{\frac{pn}{n-p}}\, \d y\right)^{\frac{n-p}{pn}} \le C(n,p,K)\left(\int_{Q\setminus F}|\nabla g(y)|^p\,\d y \right)^{1/p},
\end{equation}
for all $g \in W_0^{1,p}(Q)$  with $ |g|\leq 1$ and $\mm_n( \{x\in A': |g(x)|\geq 1/2\})\geq K\delta \ell(Q)^n$ for some positive constant $K>0$, and where $A' = \left\{x \in Q \,:\, P_i(x) \notin P_i(F)\text{ for every }i\right\}$. Following the proof of \cite[Theorem 4.8]{EG2015} what we first get is
$$\left(\int_{A'}|g(y)|^{\frac{pn}{n-p}}\, \d y\right)^{ \frac{n-1}{n}} \le C(n,p)\left(\int_{Q\setminus F}|g(y)|^{\frac{pn}{n-p}}\,\d y \right)^{\frac{p-1}{p}}\left(\int_{Q\setminus F}|\nabla g(y)|^p\,\d y \right)^{1/p}.$$
Note that by the properties of $g$ and by definition of $A'$
\[
\int_{Q\setminus A'}|g(y)|^{\frac{pn}{n-p}}\,\d y\leq \mm_n(Q\setminus A')<\dfrac{n\delta}{2n\cdot 2^n}\ell(Q)^n 
\]
and
\[
\int_{A'}|g(y)|^\frac{pn}{n-p}\,\d y\geq \left(\frac{1}{2}\right)^{\frac{pn}{n-p}}K\delta\ell(Q)^n.
\]
Therefore,

\begin{equation}\label{eq:blabla}
\begin{split}
\int_{Q\setminus F}|g(y)|^{\frac{pn}{n-p}}\,\d y \leq \int_{Q}|g(y)|^{\frac{pn}{n-p}}\,\d y & = \int_{A'}|g(y)|^{\frac{pn}{n-p}}\,\d y   + \int_{Q\setminus A'}|g(y)|^{\frac{pn}{n-p}}\,\d y\\
& \leq (1+C(n,p,K))\int_{A'}|g(y)|^{\frac{pn}{n-p}}\,\d y, 
\end{split}
\end{equation}
and finally we can get \eqref{soblev-poin-modif}.

Secondly, we apply the inequality \eqref{soblev-poin-modif}   to the function $g(y)=(f(y)-f_{A'})\phi(y)$, where $\phi\in C^{\infty}_{0}(\R^n)$ is supported in $Q$, is equal to $1$ on $\frac{1}{2}Q$ and $|\nabla \phi|\lesssim \dfrac{1}{\ell(Q)}$. We get
\begin{align}\label{sob-poin.modif}
\left(\int_{A}|f(y)-f_A|^{\frac{pn}{n-p}}\, \d y\right)^{\frac{n-p}{pn}} &\le  \left(\int_{A'}|(f(y)-f_A)\phi(y)|^{\frac{pn}{n-p}}\, \d y\right)^{\frac{n-p}{pn}} \nonumber \\
&\leq C(n,p)\left(\int_{Q\setminus F}|\nabla f(y)|^p\,\d y\right)^{1/p}\\ 
& \quad +\frac{C(n,p)}{\ell(Q)}\left(\int_{Q\setminus F}|f(y)-f_{A}|^p\,\d y\right)^{1/p}. \nonumber
\end{align}
To handle the last term above, we first prove that 
\begin{align*}
 \left(\int_{Q\setminus F}|f(y)-f_{A}|^{p}\, dy \right)^{1/p}&\le C(n,p) \left(\int_{A'}|f(y)-f_{A}|^{p}\, dy\right)^{1/p} \\
 &\leq C(n,p) \left(\left(\int_{A'}|f(y)-f_{A'}|^{p}\, dy\right)^{1/p}+\left(\int_{A'}|f_{A'}-f_{A}|^{p}\, dy\right)^{1/p}\right)\\
 &\leq C(n,p)\left(\int_{A'}|f(y)-f_{A'}|^{p}\, dy\right)^{1/p}.
\end{align*} 
In the first inequality we are using a similar trick like in \eqref{eq:blabla} (that $0\leq f\leq 1$ and that $f= 1$ and $f=0$ in  large enough sets). In the last inequality we use  H\"older inequality and the fact that  $\mm_n(A')/\mm_n(A)\leq C(n)$. 

Finally, by modifying the standard proof for the Poincar\'e inequality (see \cite[Section 4.5.2]{EG2015}) by first writing
$$
|f(y) - f(x)| \le \sum_{i=1}^n|f(z_i) - f(z_{i-1})|,
$$
with $z_i = (y_1, \dots, y_i, x_{i+1}, \dots x_n)$ so that $z_i$ and $z_{i-1}$ differ only in one coordinate, we are able to consider absolute continuity only along lines going in the coordinate directions. Thus, we obtain
$$ 
 \int_{A'}|f(y)-f_{A'}|^{p} \,dy\leq C(n,p) \ell(Q)^p\int_{Q\setminus F} |\nabla f(y)|^p\, dy.
$$ 
Combining the above with \eqref{sob-poin.modif} concludes the proof.
\end{proof}

We are now ready to prove Theorem \ref{thm:main}.

\begin{proof}[Proof of Theorem \ref{thm:main}]
Let us first make some initial reductions.
By the definition of two-sided points we can write $\mathcal{T}_{\Omega}=\bigcup_{i\in \mathbb N} \mathcal{T}_i$,  where
\begin{align*}
\mathcal{T}_i=\{x\in \partial \Omega\,: \,& \text{for every } r<2^{-i,}\text{ there exist two different connected components }\Omega^{1}_{r},\Omega^{2}_{r}\\
&\text{of } \Omega\cap B(x,r)\text{ that are nested, that is }\Omega^{j}_{s}\subset \Omega^{j}_{r}\text{ for } 0<s<r,\; j=1,2\}.
\end{align*}
Observe that if $x\in\mathcal T_i$ and  $\Omega^{1}_{r}$, $\Omega^{2}_{r}$ are the associated nested connected components of $\Omega\cap B(x,r)$ for each $r\in (0,2^{-i})$,  then $x\in \overline{\Omega^{1}_{r}}\cap \overline{\Omega^{2}_{r}}$ for all $r\in (0,2^{-i})$.

It is clear that it is enough to estimate $\dim_{\mathcal{H}}(\mathcal{T}_i)$ for a fixed $i\in\mathbb{N}$. We now cover $\mathcal{T}_i$ by countably many balls $B(z_k, 2^{-i}/6)$, where $z_k\in\mathcal{T}_i$. Then, for every $k\in\mathbb{N}$ we introduce the family of disjointed connected components of $B(z_k,2^{-i}/2)\cap \Omega$, which we denote by $\{O^{k}_{l}\}_{l\in I}$.  Let us check  now that
\begin{equation}\label{eq:formula_1}
\mathcal{T}_i\cap B(z_k,2^{-i}/6)\subseteq\bigcup_{l\neq \widetilde{l}} \overline{O^{k}_{l}}\cap \overline{O^{k}_{\widetilde{l}}}.
\end{equation}
Take $x\in \mathcal{T}_i\cap B(z_k,2^{-i}/6)$. Since $x\in\mathcal{T}_i$ there exist two different connected components of $\Omega\cap B(x,2^{-i})$, which we call $U_1,U_2$, so that $x\in \overline U_1\cap \overline U_2 $. 
Therefore, using that 
$$B(z_k,2^{-i}/2)\cap \Omega\subset B(x,2^{-i}) \cap \Omega,$$
the sets $U_1\cap B(z_k,2^{-i}/2)$ and $U_2\cap B(z_k,2^{-i}/2)$ will have connected components, which we call $O^k_l, O^k_{\tilde l}$, so that $x\in \overline {O^k_l}\cap \overline {O^k_{\tilde l}}$. We have then proved \eqref{eq:formula_1}.
Observe that we can write
$$\mathcal T_{\Omega}=\bigcup_{i,k,l,\tilde l} \mathcal{T}_i\cap B(z_k,2^{-i}/6)\cap(\overline{O^{k}_{l}}\cap \overline{O^{k}_{\widetilde{l}}}). $$
Therefore, it is enough to just estimate the Hausdorff dimension of 
$\mathcal{T}_i\cap B(z_k,2^{-i}/6)\cap(\overline{O^{k}_{l}}\cap \overline{O^{k}_{\widetilde{l}}}) $
for fixed $i,k,l,\widetilde{l}$.  Each set of this type, that we call from now on $G$, has the following properties: there is some $x_0\in \partial \Omega$  and some radius $r\in(0,1)$  so that 
    $$G \subset \partial \Omega\cap B(x_0,r),$$
    and there exist connected components $\Omega_1,\Omega_2 \subset \Omega\cap B(x_0,3r)$ for which 
    $$G \subset \partial \Omega_1\cap \partial\Omega_2.$$
We will now proceed to estimate the Hausdorff dimension of such a set $G$.

\medskip

(1) Let us first prove that $\mathcal{H}^{n-p}(G)=0$ for all $1\leq p<n$. In particular, this will handle the case $p=1$ in the claim (1) of the theorem. We will use the well-known fact that for any given $h\in L^{1}_{\text{loc}}(\R^n)$ and  $0 \leq s <n$ we have 
\begin{equation}\label{thm 2.10. Evans-Gariepy}
\mathcal{H}^{s}\left(\left\{x\in\mathbb{R}^n:\,\limsup_{\varepsilon \to 0}\frac{1}{\varepsilon^s}\int_{B(x,\varepsilon)}|h(y)|\,dy>0\right\}\right)=0.
\end{equation}
See for instance \cite[Theorem 2.10]{EG2015} for a proof of this assertion.

We start by defining a function $u \in L^{1,p}(\Omega)$,
\[
 u(x) = \max\left(0, \min\left(1, 3-r^{-1}\dist(x,x_0)\right)\chi_{\Omega_1}(x) \right),
\] 
where $\chi_{\Omega_1}$ denotes the indicator function of the set $\Omega_1$. By Proposition \ref{prop:md}, for every $x\in G$  and every $\varepsilon<r\leq 1$,
\[
\min\left(\mm_n(\Omega_1\cap B(x,\varepsilon/2\sqrt{n}),\mm_n(\Omega_2\cap B(x,\varepsilon/2\sqrt{n}))\right) \ge C(n,p)\|E\|^{-n}\varepsilon^{n}.
\]
Now by Lemma \ref{lma:sobpoinc}, where the removed set $F=\emptyset$, for the corresponding cube $Q(x,2\varepsilon/\sqrt{n})$ centered at $x$ and with side length $2\varepsilon/\sqrt{n}$ (thus containing the ball $B(x,\varepsilon/\sqrt{n})$ and contained in the ball $B(x,\varepsilon)$), we have
\[
\int_{B(x,\varepsilon)}|\nabla Eu(y)|^p\,\d y \ge 
\int_{Q(x, 2\varepsilon/\sqrt{n})}|\nabla Eu(y)|^p\,\d y \ge  C(n,p)\|E\|^{p-n}\varepsilon^{n-p}.
\]
Therefore,
$$\limsup_{\varepsilon\to 0}\frac{1}{\varepsilon^{n-p}}\int_{B(x,\varepsilon)}|\nabla Eu(y)|^p\,dy\geq C(n,p) \|E\|^{p-n} >0 $$
for every $x\in G$, and using \eqref{thm 2.10. Evans-Gariepy} we conclude $\mathcal{H}^{n-p}(G)=0$.

We are done with the case $p=1$ of Theorem \ref{thm:main}. For the case $p>1$ we will be able to be more precise in the estimation of the Hausdorff dimension in terms of the norm of the extension operator $E\colon L^{1,p}(\Omega)\to L^{1,p}(\R^n)$. For this we will follow a different approach.
\medskip

(2) Let us now focus on the case $p>1$.
First of all, call $C_1(n,p)$ and $C_2(n,p)$ the constants given by Proposition \ref{prop:md} and Lemma \ref{lma:sobpoinc} respectively. Now we choose  $0<\lambda<r$ small enough so that
$$\dfrac{\lambda^{p-1}}{1-\lambda^{p-1}}\leq  \dfrac{C_1(n,p)^{1+\frac{n-p}{n}}C_2(n,p)}{2^{2n+1}3^n  n^{(2n+1-p)/2} \mm_n(B(0,1))} \|E\|^{-2n}.$$ 
We can do this because the term on the left hand side tends to zero as $\lambda\to 0$.

For every $i \in \mathbb N$, let $\{x_k^i\}_{k \in I_i}$ be a maximal $2\lambda^i$-separated net of points in $G$. For every $i \in \mathbb N$ and $k \in I_i$ define
\[
\mathcal {B}_j^{i,k}=\{ B(x_l^{i+j},\lambda^{i+j} ): B(x_l^{i+j}, \lambda^{i+j}) \cap B(x_k^i,\lambda^i) \neq \emptyset \}, 
\]
$N_j^{i,k}  = \# \mathcal B_j^{i,k}$ for $j\geq 0$,  and 
\[
A_k^i = B(x_k^i, \lambda ^i )  \setminus ( \bigcup _{j=1}^\infty \bigcup_{l \in I_{i+j} } B(x_l ^{i+j}, \lambda^{i +j})).
\]

Now define $u_{i,k}=u \in L^{1,p}(\Omega)$ by
\[
 u(x) = \max\left(0, \min\left(1, 3-\lambda^{-i}\dist(x,x_k^i)\right)\chi_{\Omega_1}(x) \right).
\]
Without loss of generality we can assume that the extension operator applied to any function $0\leq u\leq 1$ also satisfies $0\leq Eu\leq 1$.
We then have
\begin{equation}\label{eq:ugradient}
\|u\|_{L^{1,p}(\Omega)}^p \le \int_{B(x_k^i,3\lambda^i)} |\nabla u(x)|^p\,dx \le (3^n\mm_n(B(0,1)) )\lambda^{i(n-p)} .
\end{equation}

By Proposition \ref{prop:md} and because $\lambda<r$, we have
\begin{equation*}
\min\left(\mm_n(\Omega_1\cap B(x_l^{i+j},\lambda^{i+j}/2\sqrt{n})),\mm_n(\Omega_2\cap B(x_l^{i+j},\lambda^{i+j}/2\sqrt{n}))\right) \ge C_1(n,p)\|E\|^{-n}\dfrac{\lambda^{n(i+j)}}{2^n n^{n/2}}
\end{equation*}
for every $B(x_l^{i+j},\lambda^{i+j}) \in \mathcal {B}_j^{i,k}$. (In the case $n-1<p<n$ Proposition \ref{prop:md} will give a better estimate with $\|E \|^{-p}$ in the above estimate. We shall comment about this case in a remark at the end of the proof.)
Applying Lemma \ref{lma:sobpoinc} where again the removed set $F=\emptyset$, for the corresponding cube $Q(x_l^{i+j}, 2\lambda^{i+j}/\sqrt{n})$ centered at $x_l^{i+j}$ and side length  $2\lambda^{i+j}/\sqrt{n}$ (thus containing the ball $B(x_l^{i+j}, \lambda^{i+j}/\sqrt{n})$ and contained in the ball $B(x_l^{i+j},\lambda^{i+j})$),  we have
\begin{equation}
\begin{split}
\int_{B(x_l^{i+j},\lambda^{i+j})}|\nabla Eu(y)|^p\,\d y
& \ge 
\int_{Q(x_l^{i+j},2\lambda^{i+j}/\sqrt{n}  )}|\nabla Eu(y)|^p\,\d y\\
&\ge \dfrac{C_2(n,p)C_1(n,p)^{(n-p)/n}}{ n^{(n-p)/2}}\|E\|^{p-n}\lambda^{(n-p)(i+j)}.
\end{split} \label{eq:estimate1}
\end{equation}
Thus, since the balls $\{B(x_l^{i+j},\lambda^{i+j})\}_l$ are pairwise disjoint, by summing those ones belonging to $\mathcal {B}_j^{i,k} $ and by using \eqref{eq:estimate1} and \eqref{eq:ugradient}, we get the estimate
\begin{align*}
\dfrac{C_2(n,p)C_1(n,p)^{(n-p)/n}}{ n^{(n-p)/2}} N_j^{i,k} \|E\|^{p-n}\lambda^{(n-p)(i+j)} & \le \sum_{B\in\mathcal {B}_j^{i,k} }\int_{B}|\nabla Eu(y)|^p\,\d y\\
 & \le \int_{\mathbb R^n}|\nabla Eu(y)|^p\,\d y\\
 & \le \|E\|^p \|u\|_{L^{1,p}(\Omega)}^p\\
 & \le (\mm_n(B(0,1)) 3^n) \|E\|^p\lambda^{i(n-p)}.
 \end{align*}
This implies the bound
\begin{equation}
N_j^{i,k} \le \frac{\mm_n(B(0,1)) 3^n  n^{(n-p)/2}}{C_2(n,p)C_1(n,p)^{(n-p)/n}} \|E\|^{n}\lambda^{-j(n-p)} \label{eq:estimate2}
\end{equation}
for every $i,j\in\N$ and $k\in I_i$.

Let us next estimate the $\mathcal H^{n-1}$-measure of the $(n-1)$-projections of the sets
$$F_i=\bigcup _{j=1}^\infty \bigcup_{l \in I_{i+j} } B(x_l ^{i+j}, \lambda^{i +j}) $$
for all $i\geq 1$.
By applying the estimate \eqref{eq:estimate2} and by the choice of $\lambda$, for every $i\geq 1$ and $m=1,\dots,n$,

\begin{align*}
\mathcal{H}^{n-1}(P_m(F_i)) &\leq \sum_{j=1}^\infty N_j^{i,k}(2\lambda^{i+j})^{n-1}\\
&\le 2^{n-1} \left(\frac{\mm_n(B(0,1)) 3^n n^{(n-p)/2}}{C_2(n,p)C_1(n,p)^{(n-p)/n}}\right) \|E\|^{n}\lambda^{i(n-1)} \sum_{j=1}^\infty\lambda^{j(p-1)}\\
& = 2^{n-1}\left(\frac{\mm_n(B(0,1)) 3^n n^{(n-p)/2}}{C_2(n,p)C_1(n,p)^{(n-p)/n}}\right)\|E\|^{n}\dfrac{\lambda^{p-1}}{1-\lambda^{p-1}} \lambda^{i(n-1)} \\ 
& \le \dfrac{C_1(n,p)\|E\|^{-n}}{2 n\cdot 4^n} \left( \dfrac{2\lambda^{i}}{\sqrt{n}}\right)^{n-1}.
\end{align*}

Note that in Proposition \ref{prop:md} one can always assume $C_1(n,p)\|E\|^{-n} < 1$.

Suppose now that $s < \dim_{\mathcal H} (G)$. By Lemma \ref{lma:dim} there exist  $i_0 \in \mathbb{N}$ and $k_0 \in I_{i_0}$ such that $N_j^{i_0,k_0} \geq\lambda^{-js}$ for all $j \geq  0$.

For those  fixed values $i_0,k_0$ and using the above estimate on the $\mathcal{H}^{n-1}$-measure of $P_m(F_i)$, for the case $i=i_0+j$, $j\geq 0$, we can apply Lemma \ref{lma:sobpoinc} to the function $u_{i_0,k_0}=u$, that was defined before. That is,
\begin{equation}
\begin{split}
\int_{A_l^{i_0+j}}|\nabla Eu(y)|^p\,\d y & \ge 
\int_{A_l^{i_0+j}\cap Q(x_l^{i_0+j}, 2\lambda^{i_0+j}/\sqrt{n})}|\nabla Eu(y)|^p\,\d y \\
&=\int_{ Q(x_l^{i_0+j}, 2\lambda^{i_0+j}/\sqrt{n})\setminus F_{i_0+j}}|\nabla Eu(y)|^p\,\d y \\
&\ge C(n,p)\|E\|^{p-n}\lambda^{(n-p)(i_0+j)},\notag 
\end{split}
\end{equation}
where $B(x_l^{i_0+j},\lambda^{i_0+j}) \in \mathcal {B}_j^{i_0,k_0}$. Now,
by \eqref{eq:ugradient}, and by summing over all the scales $j \geq 0$, we get 
\begin{align*}
   C(n)\|E\|^p\lambda^{i_0(n-p)}  \ge \|E\|^p \|u\|_{L^{1,p}(\Omega)}^p
   &\ge \int_{\mathbb R^n}|\nabla Eu(y)|^p\,\d y\\
   & \ge \sum_{j=0}^\infty \sum_{\{l \in I_{i_0+j}:\, B(x^{i_0+j}_{l},\lambda^{i_0+j})\in \mathcal B_{j}^{i_0,k_0}   \}} \int_{A_l^{i_0+j}}|\nabla Eu(y)|^p\,\d y
    \\
    &\ge \sum_{j=0}^\infty N_{j}^{i_0,k_0}C(n,p)\|E\|^{p-n}\lambda^{(n-p)(i_0+j)}\\
   & \ge \sum_{j=0}^\infty \lambda^{-js}C(n,p)\|E\|^{p-n}\lambda^{(n-p)(i_0+j)}\\
   &
    = C(n,p)\|E\|^{p-n}\frac{\lambda^{i_0(n-p)}}{1-\lambda^{n-p-s}}.
\end{align*}
This implies (observe that by the choice of $\lambda$ we have $\lambda\leq  C(n,p)\|E\|^\frac{-2n}{p-1}$)
\[
s \le n - p - \frac{\log\left(1-C(n,p)\|E\|^{-n}\right)}{\log(\lambda)}
  \le n - p - \frac{C(n,p)}{\|E\|^{n}\log(\|E\|)}.
\]
Since $s< \dim_{\mathcal H} (G)$ was chosen arbitrarily, this concludes the proof of (2).
\end{proof}

\begin{remark}
Let us make a remark on the case $ n-1<p<n$. In this case, by applying Proposition \ref{prop:md} $(b)$ we could slightly improve the estimates in the previous theorem. We would have that for the function $u$ defined above,
\[
\int_{A_l^{i+j}}|\nabla Eu(y)|^p\,\d y \ge C(n,p)\|E\|^{-p\left(\frac{n-p}{n}\right)}\lambda^{(n-p)(i+j)},
\] 
and therefore 
\[
s \le n - p - \frac{\log\left(1-C(n,p)\|E\|^{-p\left(\frac{n-p}{n}\right) -p}\right)}{\log(\lambda)}
  \le n - p - \frac{C(n,p)}{\|E\|^{2p-\frac{p^2}{n}}\log(\|E\|)}.
\]
\end{remark}

\section{Two-sided points of $BV$-extension domains}\label{sec_BV}

For a given domain $\Omega\subset\R^n$ the space of functions of bounded variation in $\Omega$ is
$$BV(\Omega)=\{u\in L^1(\Omega):\, \| D u\|(\Omega)<\infty\},$$
where
$$\Vert D u\Vert (\Omega)=\sup\left\{\int_{\Omega}u \,\text{div} (v)\, dx:\, v\in C^{\infty}_{0}(\Omega;\R^n) ,\, |v|\leq 1\right\}$$
denotes the total variation of $u$ on $\Omega$.
We endow this space with the norm $ \Vert u\Vert_{BV(\Omega)}=\Vert u\Vert_{L^1(\Omega)}+\Vert D u\Vert (\Omega)$. We say that $\Omega$ is a $BV$-extension domain if there exists a constant $C>0$ and a (not necessarily linear) extension operator $T\colon BV(\Omega)\to BV(\R^n)$ so that $Tu|_{\Omega}=u$ and 
$$\|Tu\|_{BV(\R^n)}\leq C\|u\|_{BV(\Omega)} $$
for all $u\in BV(\Omega)$ and where $C>0$ is an absolute constant, independent of $u$.

Let us point out that $\Omega$ being a $W^{1,1}$-extension domain always implies that it is also a $BV$-extension domain (see \cite[Lemma 2.4]{KMS2010}).

A Lebesgue measurable subset $E\subset \R^n$ has finite perimeter in $\Omega$ if $\chi_E\in BV(\Omega)$, where $\chi_E$ denotes the characteristic function of the set $E$. We set $P(E,\Omega)=\|D \chi_E\|(\Omega)$ and call it the perimeter of $E$ in $\Omega$. Moreover, the measure theoretic boundary of a set $E \subset \R^n$ is defined as 
$$\partial^M E=\left\lbrace x\in\R^n\,:\, \limsup_{r\searrow 0}\frac{|E\cap B(x,r)|}{|B(x,r)|}>0 \;\text{and}\; 
 \limsup_{r\searrow 0}\frac{|(\R^n\setminus E)\cap B(x,r)|}{|B(x,r)|}>0\right\rbrace,
$$
and for a set of finite perimeter in $\Omega$ one always has $P(E,\Omega)=\mathcal{H}^{n-1}(\partial^M E\cap\Omega)$. Finally, let us recall the useful coarea formula for $BV$ functions. Namely, for a given a function $u\in BV(\Omega)$, the superlevel sets $u_t=\{x\in\Omega:\, u(x)\geq t\}$ have finite perimeter in $\Omega$ for almost every $t\in\R$ and
 \begin{equation}\label{eq:coarea}
     \|Du\|(\Omega)=\int^{\infty}_{-\infty} P(u_t,\Omega)\,dt .
 \end{equation} 
 
\begin{proof}[Proof of Theorem \ref{thm:BV}]

We want to prove that $\dim_{\mathcal{H}}(\mathcal{T}_{\Omega})\leq n-1$.
Similarly to the beginning part of the proof of Theorem \ref{thm:main} and reasoning by contradiction assume that there exists a set $G\subset\partial\Omega\cap B(x_0,r_0)$, with $r\in (0,1)$, $x_0\in G$, and two connected components $\Omega_1,\Omega_2\subset B(x_0,3r_0)\cap \Omega$ for which $G\subset \partial\Omega_1\cap \partial\Omega_2$ such that $\dim_{\mathcal{H}}(G)>n-1$.

Consider the set $E=B(x_0,r_0)\cap \Omega_1$ for which we have $\chi_E\in BV(\Omega)$. Take any measurable function $v$ in $\R^n$ so that $v|_{\Omega}=\chi_E$. 

Note that $\widetilde E_t\cap \Omega= E$ for every $t\in (0,1)$ for the superlevel sets $\widetilde E_t=\{x\in\R^n:\, v(x)\geq t\}$. By using the measure density condition proved in \cite[Proposition 2.3]{GR2021} applied to both connected components $\Omega_1$ and $\Omega_2$, we get that there exists $c>0$ so that
$$\mm_n(\Omega_i\cap B(x,r))\geq c r^n $$
for $i=1,2$ and all $x\in G$, $r\in (0,r_0)$. In particular, for every $x\in G$ we have
\begin{align*}
    &\limsup_{r\searrow 0}\dfrac{\mm_n(B(x,r)\cap \widetilde E_t)}{\mm_n(B(x,r)}\geq \limsup_{r\searrow 0}\dfrac{\mm_n(B(x,r))\cap \Omega_1)}{\mm_n(B(x,r))}>0
\end{align*}
and
\begin{align*}
   & \limsup_{r\searrow 0}\dfrac{\mm_n(B(x,r)\cap (\R^n\setminus \widetilde E_t))}{\mm_n(B(x,r))}\geq \limsup_{r\searrow 0}\dfrac{\mm_n(B(x,r)\cap \Omega_2)}{\mm_n(B(x,r))}>0.
\end{align*}
This means that $G\subset \partial^M \widetilde E_t$. 
Hence, $\mathcal{H}^{n-1}(\partial^M \widetilde E_t)\geq \mathcal{H}^{n-1}(G)=\infty$, so $\widetilde E_t$ does not have finite perimeter in $\R^n$ for any $t\in(0,1)$. Hence, by the coarea formula \eqref{eq:coarea}, $v \notin BV(\R^n)$.

\end{proof}

\section{Examples}\label{sec:examples}

\subsection{Sharpness of the estimate for $p=1$}\label{sec:sharpnessp1}

The following example shows the sharpness of Theorem \ref{thm:main} (1). In this case, when $p=1$, we do not need to care about the norm of the extension operator and consequently, we can rely on previous non-quantitative characterizations of $W^{1,1}$-extension domains.

\begin{example}\label{2dexample}
Let us define 
\[
 \Omega_2 = (-1,1)^2 \setminus \{(x,y)\,:\,|y| \le \dist(x,C), 0\le x \le 1\}
\]
with $C \subset [0,1]$ a Cantor set with $\dim_{\mathcal H }(C) = 1$ and $\mathcal H^1(C) = 0$.
See Figure \ref{fig:cantorp1} for an illustration of the domain $\Omega_2$.

Then $\Omega_2$ is a $W^{1,1}$-extension domain and 
\[
\dim_{\mathcal H}(\mathcal T) = 1.
\]
It is easy to see that $\dim_{\mathcal H}(\mathcal T) = 1$, since
$\mathcal T = (C \times \{0\}) \setminus \{(0,0)\}$. In order to see that $\Omega_2$ is a $W^{1,1}$-extension domain, one can use the following characterization from \cite{KRZ2017} for bounded planar simply-connected domains: $\Omega$ is a $W^{1,1}$-extension domain if and only if
\begin{equation}\label{eq:curvep1}
\begin{split}
 \text{there exists a constant }K \text{ so that for every } x,y \in \Omega^c \text{ there exists a  curve }\\
 \gamma \subset \Omega^c \text{ with } x,y \in \gamma, \ell(\gamma) \le K|x-y|, \text{ and } \mathcal H^1(\gamma \cap \partial \Omega) = 0.
 \end{split}
\end{equation}
Now, the domain  $\Omega_2$ clearly satisfies \eqref{eq:curvep1}  and is thus a $W^{1,1}$-extension domain.
\end{example}

\begin{figure}
    \centering
    \includegraphics[width=0.4\columnwidth]{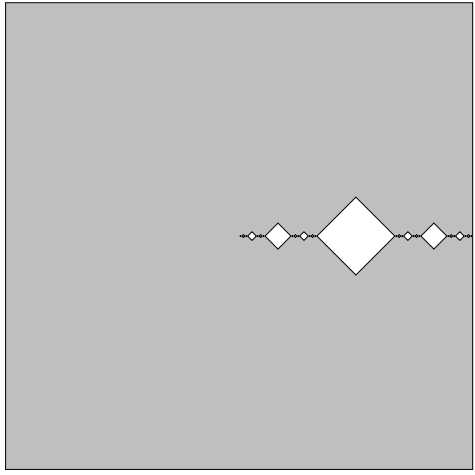}
    \caption{The domain showing the sharpness of Theorem \ref{thm:main} (1). The set $\mathcal T$ here is the fat Cantor set without its left-most point.}
    \label{fig:cantorp1}
\end{figure}

Let us remark that Example \ref{2dexample} can also be generalized to higher dimensions $n > 2$ by defining $\Omega \subset \mathbb R^n$ as a product $\Omega_2 \times  (-1,1)^{n-2}$. It is then clear that
\[
\dim_{\mathcal H}(\mathcal T) = n-1.
\]
The fact that $\Omega$ is a $W^{1,1}$-extension domain does not seem to immediately follow from known explicit results. One way to see that it is a $W^{1,1}$-extension domain is the following. Observe that the proof in \cite{KZ2020} of the fact that a product of $W^{1,p}$-extension domains, with $p>1$, is again a $W^{1,p}$-extension domain relies on the explicit form of the extension operators (which in that case can always be assumed to be a Whitney extension operator). In the case $p=1$ it is unknown if the extension can always be done with a Whitney-type extension. However, the extension operator constructed in \cite{KRZ2017} for simply-connected planar domains, and in particular for $\Omega_2$ is of Whitney-type. Thus, the argument in 
\cite{KZ2020} goes through for our product domain $\Omega$.

\subsection{A bound for the estimates for $p>1$}\label{sec:sharpness}

The case $p>1$ requires more work than the case $p=1$, since the estimate in Theorem \ref{thm:main} depends on the norm of the extension operator.
The assignment of reflected cubes in the construction of the extension operator, and the estimate of the norm of the extension operator follow roughly the proof of the sufficiency of the characterizing curve condition of planar simply connected $W^{1,p}$-extension domains \cite{KRZ2015}.

Let us describe the family of domains $\Omega_\lambda$ we consider, where  $\lambda \in (0,1/2)$ refers to the contraction ratio of the Cantor set $C_\lambda \subset \mathbb R^{n-1}$. The Cantor sets $C_\lambda$ we use are the standard ones obtained as $C_\lambda = \prod_{i=1}^{n-1}K_\lambda$ with $K_\lambda$ being the Cantor set on the unit interval given as the attractor of the iterated function system $\{f_1(x) = \lambda x, f_2(x) = \lambda x + 1- \lambda\}$.

We define first a set
\[
 D = (0,1)^{n-2}\times \left((-2,1) \times (-3/2,3/2) \setminus [-1,0]\times[-1,1]\right)
\]
and then the actual domain by carving out part of $D$:
\[
\Omega_\lambda  = D \setminus N_\lambda,
\]
where
\[
N_\lambda = \left\{(x_1,\dots,x_n) \in [0,1]^n\,:\, |x_n| \le \dist((x_1,\dots,x_{n-1}),C_\lambda)\right\}.
\]
Then, the set of two-sided points for $\Omega_\lambda$ is
\[
 \mathcal{T}_{\Omega_\lambda} = C_\lambda \times \{0\}
\]
and so it has dimension
\begin{equation}\label{eq:Tlambdadim}
\dim_{\mathcal H}(\mathcal{T}_{\Omega_\lambda}) = \dim_{\mathcal H}(C_\lambda) =  -\frac{(n-1)\log 2}{\log \lambda}.
\end{equation}
Our aim is to build an extension operator $E_\lambda$ from $L^{1,p}(\Omega_{\lambda})$ to $L^{1,p}(\R^n)$ for which we have
$$\dim_{\mathcal H} (C_{\lambda})\geq n-p-\frac{C(n,p)}{|| E_{\lambda}||} .$$

It is enough to construct an extension operator $E_\lambda \colon L^{1,p}(\Omega_\lambda) \to L^{1,p}(D)$, since the extension from $L^{1,p}(D)$ to $L^{1,p}(\mathbb R^n)$ is independent of $\lambda$, and exists since $D$ is a Lipschitz domain.  Moreover, our definition of $E_{\lambda}$ will be independent of $p$ and will give a bounded operator between the Sobolev spaces $W^{1,p}(\Omega_\lambda)$ and  $W^{1,p}(D)$.
From now on  consider $\lambda \in (0,1/2)$ fixed and we will denote the extension operator by $E$ instead of $E_{\lambda}$ to simplify notation.

Below by a dyadic cube we mean a set of the form $Q = [0,2^{-k}]^n + \mathtt {j} \subset \R^n$ for some $k \in \mathbb Z$ and $\mathtt{j} \in 2^{-k}{\mathbb Z}^n$.
Let $\mathcal W = \{Q_i\}_{i\in \mathbb N}$ be a Whitney decomposition of the interior of $N_\lambda$ and $\widetilde{\mathcal W} = \{\widetilde Q_i\}_{i \in \mathbb N}$  a Whitney decomposition of $\R^n\setminus N_{\lambda}$. This is 
\begin{itemize}
    \item[(W1)] Each $Q_i$ is a closed dyadic cube inside $N_{\lambda}$.
    \item[(W2)] $N_{\lambda}=\bigcup_i Q_i$ and for every $i \ne j$ we have $\text{int}(Q_i) \cap \text{int}(Q_j) = \emptyset$.
    \item[(W3)]  For every $i$ we have $\sqrt{n}\ell(Q_i) \le \dist(Q_i,\partial N_{\lambda})\le 4\sqrt{n} \ell(Q_i)$,
    \item[(W4)] If $Q_i\cap Q_j \ne \emptyset$, we have $\frac{1}{4}\ell(Q_i) \le  \ell(Q_j)\le 4\ell(Q_i)$. 
\end{itemize}

The definition of $\widetilde{\mathcal W}$ goes parallel. See \cite[Chapter VI]{stein} for the existence of such Whitney decompositions.  Consider also the subfamily of Whitney cubes 
$$\mathcal{V}=\{Q\in \mathcal{W}:\,Q\cap ([0,1]^{n-1}\times \{0\})\neq \emptyset\}.$$

Let us also distinguish an important subset of $\Omega_{\lambda}$, that we call 
$$\widetilde Q_0=  (0,1)^{n-2}\times \left((-2,1) \times (-3/2,3/2) \setminus [-1,1]\times[-1,1]\right).$$
Note that for every $\widetilde Q\in \widetilde{\mathcal{W}}$ we have $\partial ((0,1)^{n-1}\times (-1,1))\cap \text{int}( \widetilde Q)=\emptyset$.

Let $u \in  W^{1,p}(\Omega_\lambda)$ be given and choose $\{\psi_i\}_{i \in \mathbb N}$ a partition of unity subordinate to the open cover $\{ (9/8)\text{int}(Q_i)\}_{i\in\N}$ and so that $|\nabla \psi_i(x)|\lesssim \ell (Q_i)^{-1}$.

We will assign a value 
\[
a_i = \frac1{\mm_n(\widetilde Q_{R(i)})}\int_{\widetilde Q_{R(i)}}u(x)\,dx
\]
for every $i \in \N$, where the function $R \colon \mathbb N \to  \mathbb N$ is defined as follows. If $Q_i\in \mathcal{V}$, then
$R(i) = 0$. If $Q_i \notin \mathcal{V}$ we assign $R(i)$ to be the unique index so that $Q_i$ and $\widetilde Q_{R(i)}$ belong to the same half-space $\{x_n<0\}$ or $\{x_n>0\}$,  $P_n(Q_i) \subset P_n(\widetilde Q_{R(i)})$,  $\ell(\widetilde Q_{R(i)})\leq 2\ell(Q_i) $ where $P_n \colon \mathbb R^n \to \mathbb R^{n-1} \colon (x_1,\dots,x_n) \mapsto (x_1,\dots,x_{n-1})$, and $\widetilde Q_{R(i)}$ is the closest cube to $Q_i$ with the first three properties.

Now, we define the extension of the function $u$ by
\begin{equation}\label{eq:ext.op.example2}
Eu(x) = \begin{cases}
u(x),& \text{if } x \in \Omega_\lambda\\
\sum_{i=1}^\infty a_i\psi_i(x), &\text{  if $x\in \text{int}(N_{\lambda})$, }\\
0,  &\text{  if $x\in \partial N_{\lambda}\cap D$, }.
\end{cases}
\end{equation}

  Let us explain first why $Eu\in L^{1,p}(D)$. On the one hand $Eu\in L^{1,p}(\Omega_\lambda)$ and on the other hand we will see later that $Eu\in L^{1,p}(\text{int}(N_\lambda))$. We now explain how one can get that $Eu\in L^{1,p}(D\setminus C_{\lambda})$ and that will be sufficient since $C_{\lambda}$ is a removable set. For that it is enough to notice that the trace of $u$
$$Tu(x)=\lim_{r\to 0} \dfrac{1}{\mathcal{L}^n(B(x,r)\cap \Omega_\lambda)}\int_{B(x,r)\cap \Omega_\lambda}u(y)\,dy $$
on $\partial N_{\lambda}\setminus C_{\lambda}$  coincides with that of $Eu|_{\text{int}(N_\lambda)}$
$$TEu(x)=\lim_{r\to 0} \dfrac{1}{\mathcal{L}^n(B(x,r)\cap N_\lambda)}\int_{B(x,r)\cap \text{int}(N_\lambda)}Eu(y)\,dy .$$
This follows immediately from the definition of $E$.

 To conclude that $Eu$ is an extension operator it remains to  control the $L^p$-norm of the gradient of the extension on $\text{int}(N_{\lambda})$ by the $L^p$-norm of the gradient of the initial function. 

We know that $\text{supp}(\psi_i)\subseteq \frac{9}{8}Q_i$ and that $|\nabla \psi_i(x)|\lesssim \ell (Q_i)^{-1}$ for every $x$ and $i\in\N$, so it is clear that for every $x\in Q_i$,
\[
|\nabla Eu(x)| \le \left|\sum_{Q_j\cap Q_i\neq \emptyset} \nabla \psi_j(x)(a_j-a_i)\right| \lesssim \sum_{Q_j\cap Q_i\neq \emptyset} \ell (Q_j)^{-1}|a_j-a_i|
\]
Now, if we take a cube $Q_i\in \mathcal{W}$, using that at most $C(n)$ other cubes of the Whitney decomposition are intersecting it and that $\ell(Q_i)\sim \ell(Q_j)$ if $Q_i\cap Q_j\neq \emptyset$ we write
\begin{equation}\label{eq:norm in a cube}
\begin{split}
\Vert \nabla Eu\Vert^{p}_{L^p(Q_i)}&=\int_{Q_i} |\nabla E u(x)|^p\,dx\lesssim \int_{Q_i} \sum_{Q_j\cap Q_i\neq \emptyset}\ell (Q_j)^{-p}|a_i-a_j|^p\,dx \\
&\lesssim \ell(Q_i)^{n-p}\sum_{Q_j\cap Q_i\neq \emptyset}|a_i-a_j|^p.
\end{split}
\end{equation}
 It will be useful to work with chains of Whitney cubes that we next define.
Given $i,j$ so that $Q_i\cap Q_j\neq \emptyset$ and $Q_i,Q_j\notin\mathcal V$ we define the chain of cubes joining $ \widetilde Q_{R(i)}$ with $\widetilde Q_{R(j)} $, and denote it by  $C(\widetilde Q_{R(i)}, \widetilde Q_{R(j)})$, to be a minimal family of Whitney cubes whose union's interior  is a connected set containing both the interiors of $ \widetilde Q_{R(i)}$ and $\widetilde Q_{R(j)} $. Note that we always have $\# C(\widetilde Q_{R(i)}, \widetilde Q'_{R(j)})\leq C_0(n) $. Suppose $Q_i\notin\mathcal{V}$ is a cube so that there exists $Q_j\in\mathcal V$ with $Q_i\cap Q_j\neq\emptyset$. For the associated cube $\widetilde Q_{R(i)}$   we define $C(\widetilde Q_{R(i)}, \widetilde Q_0)$ as a minimal family of sets in $\widetilde W\cup \{\widetilde Q_0\}
$ whose union's interior  is a connected set containing both the interiors of $ \widetilde Q_{R(i)}$ and $\widetilde Q_0 $ and so  that every $\widetilde Q\in C(\widetilde Q_{R(i)},\widetilde Q_0)  $ satisfies $P_n(\widetilde Q_{R(i)})\subset P_n(\widetilde Q)$. 

We can assume there is an order in the chain when moving from $ \widetilde Q_{R(i)}$ to $\widetilde Q_{R(j)} $ and call $\widetilde Q_{\text{next}}$ the next cube in the chain after $\widetilde Q$. We write 
$$C(\widetilde Q_{R(i)},\widetilde Q_{R(j)})=\{\widetilde Q_{R(i)}, (\widetilde {Q}_{R(i)})_{\text{next}},\dots, \widetilde{Q}_{R(j)}\}.$$
To ease notation in the following sums from now on we write $$C_{i,j}=C(\widetilde Q_{R(i)}, \widetilde Q_{R(j)}) \setminus \{\widetilde Q_{R(j)}\}\quad\text{and}\quad C_{i,0}=C(\widetilde Q_{R(i)}, \widetilde Q_0)\setminus \{\widetilde Q_0\}.$$
Note that if $\widetilde{Q}_i \not \in \mathcal V$ and there does not exists $\widetilde{Q}_j$ such that $\widetilde{Q}_j \in \mathcal V$ and $\widetilde{Q}_i \cap \widetilde{Q}_j \neq \emptyset$ we define $C_{i,0} = \emptyset$.

Let also write $$\mathcal I=\left\{\widetilde Q\in \widetilde W:\, \widetilde Q=\widetilde Q_{R(i)} \;\; \text{for some}\; i\geq 1\right\}. $$

 We assert that the following claim holds.

\begin{claim}\label{claim_example} With the above notation and for every $r>0$ we have the following.
\begin{enumerate}
    \item For every $Q_i\notin \mathcal{V}$ 
     \begin{align*}
 \Vert \nabla  Eu\Vert^{p}_{L^p(Q_i)}\lesssim&  \sum_{ \{ \widetilde Q\in \mathcal I\,:\, \# C(\widetilde Q_{R(i)},\widetilde Q)\leq C_0(n)\} } \int_{\widetilde Q}|\nabla u(x)|^p\,dx\\
        &+\ell(Q_i)^{n-p-rp} D(r,p)\sum_{ \widetilde Q\in C_{i,0}} \ell(\widetilde Q)^{p-n+rp}\int_{\widetilde Q\cup\widetilde{Q}_{\text{next}}} |\nabla u(x)|^p\,dx,
       \end{align*}  
        where $D(r,p)=(1-2^{\frac{-rp}{p-1}})^{1-p}$, and for every $Q_i\in \mathcal{V}$, we have
  \begin{align*}
 \Vert \nabla Eu\Vert^{p}_{L^p(Q_i)}
 \lesssim&\, \ell(Q_i)^{n-p-rp}\sum_{\substack{Q_j\cap Q_i\neq \emptyset\\Q_j\notin\mathcal{V}}} D(r,p) \sum_{ \widetilde Q\in C_{j,0}} \ell(\widetilde Q)^{p-n+rp}\int_{\widetilde Q\cup\widetilde{Q}_{\text{next}}} |\nabla u(x)|^p\,dx.
 \end{align*} 
 \item For a given $\widetilde{Q}\in\widetilde{\mathcal{W}}$  and  $k\in\Z$ we have $$\#{\left\{Q_i\in \mathcal W \setminus\mathcal V:\,Q_i\;\text{has a neighbouring cube in}\; \mathcal{V},\;\ell(\widetilde{Q})=2^k\ell (Q_i),\; \widetilde{Q}\in C_{i,0}\right\}}\lesssim 2^{-(n-1)\frac{k\log 2}{\log \lambda}}. $$
\end{enumerate}

\end{claim}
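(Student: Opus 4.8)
The plan is to prove the two parts of Claim \ref{claim_example} essentially independently, since part (1) is an energy estimate that chains together the telescoping differences of the averages $a_i$, while part (2) is a purely combinatorial/geometric counting statement about the Whitney decomposition of $N_\lambda$.

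\textbf{Part (1).} Starting from the local estimate \eqref{eq:norm in a cube}, namely $\Vert \nabla Eu\Vert^p_{L^p(Q_i)}\lesssim \ell(Q_i)^{n-p}\sum_{Q_j\cap Q_i\neq\emptyset}|a_i-a_j|^p$, the task reduces to bounding each difference $|a_i - a_j|$. Since $a_i$ and $a_j$ are averages of $u$ over reflected cubes $\widetilde Q_{R(i)}$ and $\widetilde Q_{R(j)}$, I would telescope along the chain $C(\widetilde Q_{R(i)},\widetilde Q_{R(j)})$ (or, when one of the cubes lies in $\mathcal V$, along $C(\widetilde Q_{R(i)},\widetilde Q_0)$ and back): write $a_i - a_j$ as a sum over consecutive cubes $\widetilde Q, \widetilde Q_{\text{next}}$ in the chain of differences of averages, each of which is controlled by a standard Poincar\'e-type inequality $|u_{\widetilde Q}-u_{\widetilde Q_{\text{next}}}|\lesssim \ell(\widetilde Q)^{1-n/p}\left(\int_{\widetilde Q\cup \widetilde Q_{\text{next}}}|\nabla u|^p\right)^{1/p}$ using that adjacent Whitney cubes have comparable size and overlapping dilations. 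For the ``short'' chains joining two reflected cubes of comparable size (those with $\#C(\widetilde Q_{R(i)},\widetilde Q)\leq C_0(n)$) the sum has boundedly many terms, giving the first sum in the claimed bound. For the ``long'' chain toward $\widetilde Q_0$, the cube sizes vary, so I would insert the weight $\ell(\widetilde Q)^{(p-n+rp)/p}\cdot\ell(\widetilde Q)^{-(p-n+rp)/p}$ and apply H\"older's inequality with exponents $p$ and $p/(p-1)$: the $\ell(\widetilde Q)^{-(p-n+rp)/p}$ factors, being sizes that geometrically decrease toward $C_\lambda$, sum to a convergent geometric series whose value is exactly $D(r,p)^{1/p}=(1-2^{-rp/(p-1)})^{-(p-1)/p}$ after raising to the $p$th power, producing the factor $\ell(Q_i)^{n-p-rp}D(r,p)$ and the weighted sum over $C_{i,0}$. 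For $Q_i\in\mathcal V$ one has $a_i=u_{\widetilde Q_0}$, so $|a_i-a_j|$ is nonzero only when $Q_j\notin\mathcal V$, and one repeats the argument for the neighbour $Q_j$, yielding the stated second formula.

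\textbf{Part (2).} This is a counting bound: for fixed $\widetilde Q\in\widetilde{\mathcal W}$ and fixed $k\in\Z$ one must bound the number of Whitney cubes $Q_i$ of $N_\lambda$ with $\ell(Q_i)=2^{-k}\ell(\widetilde Q)$, adjacent to some cube in $\mathcal V$, and having $\widetilde Q$ in their chain $C_{i,0}$ toward $\widetilde Q_0$. The key geometric observation is that the cubes $Q_i$ touching $[0,1]^{n-1}\times\{0\}$ cluster around $C_\lambda$, and at generation scale $\sim 2^{-k}\ell(\widetilde Q)$ the $\lambda$-Cantor set $C_\lambda$ is covered by about $2^{-(n-1)k\log 2/\log\lambda}$ cubes of that size (this is just the count $(\text{something})^{j}$ where $\lambda^j \sim 2^{-k}\ell(\widetilde Q)$, matching the box-counting dimension computed in \eqref{eq:Tlambdadim}). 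The constraint $P_n(\widetilde Q_{R(i)})\subset P_n(\widetilde Q)$ for every cube in the chain forces the $Q_i$ to have their projection inside $P_n(\widetilde Q)$, and for each admissible horizontal location there are only boundedly many such $Q_i$ (they must also be at vertical distance comparable to their size from the graph defining $N_\lambda$); so the count is dominated by the number of scale-$2^{-k}\ell(\widetilde Q)$ pieces of $C_\lambda$ fitting in $P_n(\widetilde Q)$, which is $\lesssim 2^{-(n-1)k\log 2/\log\lambda}$.

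\textbf{Main obstacle.} I expect the delicate point to be the bookkeeping in part (1): making sure the chains $C_{i,j}$ and $C_{i,0}$ are well-defined with uniformly bounded ``short'' pieces, that the weighted H\"older step genuinely produces the constant $D(r,p)=(1-2^{-rp/(p-1)})^{1-p}$ (so one must track that consecutive reflected cubes along the chain toward $C_\lambda$ halve in size, giving the ratio $2^{-rp/(p-1)}$ in the geometric series), and that the union $\widetilde Q\cup\widetilde Q_{\text{next}}$ appearing under the gradient integral is legitimate, i.e. consecutive chain cubes overlap after a fixed dilation. The property $P_n(\widetilde Q_{R(i)})\subset P_n(\widetilde Q)$ imposed on the chain toward $\widetilde Q_0$ is exactly what later lets part (2) be summed against the weights, so the two parts must be set up compatibly; verifying that such a chain with this projection property and bounded overlap actually exists in the geometry of $D\setminus N_\lambda$ is the part requiring the most care.
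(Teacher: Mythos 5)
Your proposal follows essentially the same route as the paper's proof: the local Whitney estimate, telescoping $|a_i-a_j|$ along the chains with the Poincar\'e inequality, treating the boundedly short chains $C_{i,j}$ directly and the long chains $C_{i,0}$ via the weighted H\"older trick whose geometric series (dyadically increasing cube sizes, boundedly many per scale) yields exactly $D(r,p)\,\ell(Q_i)^{-rp}$, and proving part (2) by counting generation-$l$ Cantor cubes of $C_\lambda$ inside the boundedly many generation-$m$ cubes meeting $P_n(\widetilde Q)$, with boundedly many Whitney cubes per Cantor piece. The points you flag as delicate (minimal chains with the projection property, bounded multiplicity per scale, overlap of consecutive chain cubes) are precisely the ingredients the paper's argument relies on, so the plan is correct.
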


Assuming for a moment that the claim is true let us show how one can estimate the full norm $ \Vert \nabla Eu\Vert^{p}_{L^p(N_{\lambda})}$.  We first use Claim \ref{claim_example} (i) and change the order of summation to get

 \begin{align*}
 \Vert \nabla Eu\Vert^{p}_{L^p(N_{\lambda})}=&\,\sum_{Q_i\in \mathcal W} \Vert \nabla E u\Vert ^{p}_{L^p(Q_i)}=\sum_{Q_i\notin \mathcal V} \Vert \nabla E u\Vert ^{p}_{L^p(Q_i)}+\sum_{Q_i\in \mathcal V} \Vert \nabla E u\Vert ^{p}_{L^p(Q_i)} \\
 \lesssim&\,  \sum_{\widetilde{Q}\in \mathcal{I}}\sum_{ \{ i:\, \# C(\widetilde Q_{R(i)},  \widetilde Q)\leq C_0(n) \}}\int_{\widetilde Q}|\nabla u(x)|^p\,dx  \\
 &+ 2\sum_{\widetilde{Q}\in\widetilde{\mathcal{W}}}\sum_{\widetilde{Q}\in C_{i,0}}
 \ell(Q_i)^{n-p-rp} D(r,p) \ell(\widetilde Q)^{p-n+rp}\int_{\widetilde Q\cup\widetilde{Q}_{\text{next}}} |\nabla u(x)|^p\,dx \\
 \lesssim &\, \Vert \nabla u \Vert^{p}_{L^p(\Omega_\lambda)}+ \sum_{\widetilde{Q}\in\widetilde{\mathcal{W}}}\sum_{\widetilde{Q}\in C_{i,0}}
 \ell(Q_i)^{n-p-rp} D(r,p) \ell(\widetilde Q)^{p-n+rp}\int_{\widetilde Q\cup\widetilde{Q}_{\text{next}}} |\nabla u(x)|^p\,dx.
\end{align*}

Moreover, by Claim \ref{claim_example} (ii) it follows that
$$\sum_{\{i:\,\widetilde{Q}\in C_{i,0}\}}  \ell(Q_i)^{n-p-rp}\lesssim \sum^{\infty}_{k=0}2^{-(n-1)\frac{k\log 2}{\log \lambda}} (2^{-k}\ell(\widetilde{Q}))^{n-p-rp}=\frac{\ell(\widetilde Q)^{n-p-rp}}{1-2^{-n+p-(n-1)\frac{\log 2}{\log\lambda}+rp}}.$$
So, joining these facts together we get
\begin{align*}
 \Vert \nabla Eu\Vert^{p}_{L^p(N_{\lambda})}&\lesssim \Vert \nabla u \Vert^{p}_{L^p(\Omega_\lambda)}+ \sum_{\widetilde{Q}\in\widetilde{\mathcal{W}}}D(r,p)\left( \frac{1}{1-2^{-n+p-(n-1)\frac{\log 2}{\log\lambda}+rp}}\right)\int_{\widetilde Q\cup\widetilde{Q}_{\text{next}}} |\nabla u(x)|^p\,dx  \\
 &\lesssim \left(\frac{1}{1-2^{\frac{-rp}{p-1}}}\right)^{p-1}\left( \frac{1}{1-2^{-n+p-(n-1)\frac{\log 2}{\log\lambda}+rp}}\right)\Vert  \nabla u\Vert^{p}_{L^p(\Omega_{\lambda})}.
 \end{align*}
Choosing $r=\frac{p-1}{p^2}(n-p-\dim_{\mathcal H} (C_{\lambda}))$, we conclude that
$$|| E||\lesssim \frac{1}{1-2^{\frac{1}{p}(-n+p+\dim_{\mathcal H} (C_\lambda))}},$$
which yields
$$\dim_{\mathcal H} (C_{\lambda})\geq n-p-\frac{C(n,p)}{|| E||}.$$
Let us now prove the Claim \ref{claim_example}.

\begin{proof}[Proof of Claim \ref{claim_example}]
To prove (i) we need to estimate $|a_i-a_j|^p$ in the expression \eqref{eq:norm in a cube}. First note that from \eqref{eq:norm in a cube} one gets 
 \begin{equation}
 \begin{split}
  |a_i-a_j|^p&\leq \left(\sum_{ \widetilde Q\in C_{i,j}}  \left|\frac{1}{\mm_n(\widetilde Q)}\int_{\widetilde Q} u(x)\,dx-\frac{1}{\mm_n(\widetilde{Q}_{\text{next}})}\int_{\widetilde{Q}_{\text{next}}} u(x)\,dx\right|\right)^p\\
        &\lesssim  \left(\sum_{ \widetilde Q\in C_{i,j}} \ell(\widetilde Q)^{1-n}\int_{\widetilde Q\cup\widetilde{Q}_{\text{next}}} |\nabla u(x)|\,dx\right)^p, \label{eq:exa1}
\end{split}
 \end{equation}
    where we are using the Poincar\'e inequality in the last line (see \cite[Lemma 2.2]{jo1981} and also \cite{boj}). Observe that if $Q_j,Q_i\in \mathcal V$ then $R(i)=R(j)=0$ and $|a_j-a_i|=0$. We now consider two cases.
\begin{enumerate}
\item Suppose $i,j$ are so that $Q_i\cap Q_j\neq \emptyset$ and $Q_i,Q_j\notin\mathcal V$.  Then  using \eqref{eq:exa1}, that  $\# C(\widetilde Q_{R(i)}, \widetilde Q_{(R(j)})\leq C_0(n)$,      that the sides of the cubes of the chain  are comparable to that of $\widetilde Q_{R(i)}$, and hence that of $Q_i$, and applying H\"older inequality
\begin{equation}\label{eq:case_1_last_example}
\begin{split}
     |a_i-a_j|^p&\lesssim \sum_{ \widetilde Q\in C_{i,j} } \ell(\widetilde Q)^{(1-n)p}\left(\int_{\widetilde Q\cup\widetilde{Q}_{\text{next}}} |\nabla u(x)|\,dx\right)^p \\
    &\lesssim \ell(Q_i)^{p-n} \sum_{ \widetilde Q\in C_{i,j} } \int_{\widetilde Q\cup\widetilde{Q}_{\text{next}}} |\nabla u(x)|^p\,dx.
    \end{split}
    \end{equation}

\item Suppose $i,j$ are such that $Q_i\cap Q_j\neq \emptyset$, $Q_j\in \mathcal V$ (then $R(j)=0)$ and  $Q_i\notin \mathcal V$ then we fix $r>0$ to be determined later and apply H\"older inequality  to  \eqref{eq:exa1} to get
    \begin{equation}\label{eq:case_2_last_example}
    \begin{split}
        |a_i-a_j|^p&\lesssim \left( \sum_{ \widetilde Q\in C_{i,0}} \ell(\widetilde Q)^{-r}  \ell(\widetilde Q)^{1-n+r}\int_{\widetilde Q\cup\widetilde{Q}_{\text{next}}} |\nabla u(x)|\,dx\right)^p  \\
        &\le \left(\sum_{ \widetilde Q\in C_{i,0}} \ell(\widetilde Q)^{-r\frac{p}{p-1}} \right)^{p-1}  \left(\sum_{ \widetilde Q\in C_{i,0}} \ell(\widetilde Q)^{(1-n+r)p}\left(\int_{\widetilde Q\cup\widetilde{Q}_{\text{next}}} |\nabla u(x)|\,dx\right)^p\right) \\
        & \lesssim \left(\sum^{\infty}_{k=0} (2^k\ell(\widetilde{Q}_{R(i)}))^{-r\frac{p}{p-1}} \right)^{p-1} 
        \left(\sum_{ \widetilde Q\in C_{i,0}} \ell(\widetilde Q)^{p-n+rp}\int_{\widetilde Q\cup\widetilde{Q}_{\text{next}}} |\nabla u(x)|^p\,dx\right) \\
         & \lesssim D(r,p) \ell (\widetilde{Q}_{R(i)})^{-rp}
        \left(\sum_{ \widetilde Q\in C_{i,0}} \ell(\widetilde Q)^{p-n+rp}\int_{\widetilde Q\cup\widetilde{Q}_{\text{next}}} |\nabla u(x)|^p\,dx\right).
        \end{split}
    \end{equation}
 
\end{enumerate}
 Going back to equation \eqref{eq:norm in a cube} for any 
 $Q_i\notin \mathcal{V}$, using \eqref{eq:case_1_last_example} and \eqref{eq:case_2_last_example} we have 
 \begin{align*}
 \Vert \nabla  Eu\Vert^{p}_{L^p(Q_i)}\lesssim&  \,\ell(Q_i)^{n-p}\left(\sum_{Q_j\cap Q_i\neq \emptyset,\,Q_j\notin\mathcal{V}}|a_i-a_j|^p+ \sum_{Q_j\cap Q_i\neq \emptyset,\,Q_j\in\mathcal{V}}|a_i-a_j|^p\right) \\
 \lesssim& \,\ell(Q_i)^{n-p}\left( \sum_{Q_j\cap Q_i\neq \emptyset,\,Q_j\notin\mathcal{V}} \ell(Q_i)^{p-n}\sum_{\widetilde Q\in C_{i,j}} \int_{\widetilde Q\cup\widetilde{Q}_{\text{next}}} |\nabla u(x)|^p\,dx \right.\\
 &+ \left. \sum_{Q_j\cap Q_i\neq \emptyset,\,Q_j\in\mathcal{V}}
 D(r,p) \ell ({Q}_i)^{-rp}\sum_{\widetilde Q\in C_{i,0}} \ell(\widetilde Q)^{p-n+rp}\int_{\widetilde Q\cup\widetilde{Q}_{\text{next}}} |\nabla u(x)|^p\,dx\right) \\
        \lesssim&  \sum_{ \{ \widetilde Q\in\mathcal{I}\,:\, \# C(\widetilde Q_{R(i)},\widetilde Q)\leq C_0(n)\} } \int_{\widetilde Q}|\nabla u(x)|^p\,dx\\
        &+\ell(Q_i)^{n-p-rp} D(r,p)\sum_{ \widetilde Q\in C_{i,0}} \ell(\widetilde Q)^{p-n+rp}\int_{\widetilde Q\cup\widetilde{Q}_{\text{next}}} |\nabla u(x)|^p\,dx,
 \end{align*}
 and if $Q_i\in \mathcal{V}$, using only \eqref{eq:case_2_last_example}
\begin{align*}
 \Vert \nabla Eu\Vert^{p}_{L^p(Q_i)}\lesssim&  \ell(Q_i)^{n-p}\sum_{Q_j\cap Q_i\neq \emptyset,\,Q_j\notin\mathcal{V}}|a_i-a_j|^p  \\
 \lesssim&\, \ell(Q_i)^{n-p-rp}\sum_{Q_j\cap Q_i\neq \emptyset,\,Q_j\notin\mathcal{V}} D(r,p) \sum_{ \widetilde Q\in C_{j,0}} \ell(\widetilde Q)^{p-n+rp}\int_{\widetilde Q\cup\widetilde{Q}_{\text{next}}} |\nabla u(x)|^p\,dx.
 \end{align*}
 which proves (i).

Let us prove (ii). Let us write the Cantor set $C_\lambda\subset[0,1]^{n-1}$ as
$$C_{\lambda}= \bigcap_{i=0}^{\infty}C^{i}_{\lambda}= \bigcap_{i=0}^{\infty}\bigcup_{1\leq j \leq 2^{(n-1)i}} I_{i,j},$$ where $I_{i,j}$ is a translated copy of $[0,\lambda^i]^{n-1}$ for all $i =0,1,2,\ldots$ and $j=1,2,\ldots , 2^{(n-1)i}$.
It is clear that for $i<i'$, any cube   $I_{i,j} $ contains $2^{(n-1)(i'-i)}$ cubes of side length $\lambda^{i'}$.  

 Fix $\widetilde{Q} \in \widetilde{\mathcal{W}}$ and $k \in \N$. Let $t\in \N$ such that $\ell(\widetilde{Q})=2^{-t}$. We count the cardinality of $$A={\left\{Q_i\in \mathcal W \setminus\mathcal V:\,Q_i\;\text{has a neighbouring cube in}\; \mathcal{V},\;\ell(\widetilde{Q})=2^k\ell (Q_i),\; \widetilde{Q}\in C_{i,0}\right\}}.$$
Define $B=\{P_n(Q_i)\}_{Q_i\in A}$,  where $P_n \colon \mathbb R^n \to \mathbb R^{n-1} \colon (x_1,\dots,x_n) \mapsto (x_1,\dots,x_{n-1})$.

 Let $m$ be the least positive integer such that $\lambda^m < 2^{-t}$ and let $l$ be the least positive integer so that  $\lambda^l\leq 2^{-t-k} $. By the properties of the Whitney decomposition, the construction of the Cantor set and the minimality of $m$ it is enough to count $ \#\{Q\in B:\, \dist(Q, I_{m,j})\le C(n)\ell(Q)\} $ for a fixed $I_{m,j}$. Moreover by the selection of $l$  none of the cubes $I_{l,j}$ contains any $Q\in B$.

Because $\lambda^l\leq \ell(Q) $, we have
$$\#\left\{Q\in B:\, \dist(Q,I_{l,j'})\le C(n) \ell(Q)\right\}\leq c(n)$$ 
 for all $I_{l,j'}\subset I_{m,j} $. Finally since $I_{m,j} \cap C^{l}_{\lambda}$ is a disjoint union of $2^{(n-1)(l-m)}$ cubes $I_{l,j'}$ of side length $\lambda^l$ we conclude that
$$ \# A\lesssim\# B \leq c(n) 2^{(n-1)(l-m)}   \lesssim 2^{-k(n-1) \frac{\log 2}{\log \lambda} }. $$

\end{proof}

\end{document}